\newcommand{\NN}{\mathbb{N}}
\newtheorem{question}{Question}
\newtheorem*{answer}{Answer}
\newtheorem{proposition}{Proposition}[section]
\newtheorem{lemma}[proposition]{Lemma}
\newtheorem{theorem}[proposition]{Theorem}
\newtheorem{corollary}[proposition]{Corollary}
\theoremstyle{definition}
\newtheorem{definition}[proposition]{Definition}
\newtheorem{example}[proposition]{Example}
\newtheorem{remark}[proposition]{Remark}
\title{Solving difference equations in sequences:\\ Universality and Undecidability}
\date{}
\author{Gleb Pogudin\footnote{\href{mailto:pogudin.gleb@gmail.com}{pogudin.gleb@gmail.com}, Department of Computer Science, National Research University Higher School of Economics, Moscow, Russia; current address: LIX, CNRS, Ecole Polytechnique, Institut Polytechnique de Paris, France}, Thomas Scanlon\footnote{\href{mailto:scanlon@math.berkeley.edu}{scanlon@math.berkeley.edu}, University of California at Berkeley, Department of Mathematics, Berkeley, USA}, Michael Wibmer\footnote{\href{mailto:wibmer@math.tugraz.at}{wibmer@math.tugraz.at}, Institut of Analysis and Number Theory, Graz University of Technology, Graz, Austria}}
\begin{document}

\maketitle

\begin{abstract}
	\let\thefootnote\relax\footnotetext{{\em \hspace{-3mm} Mathematics Subject Classification Codes:} 
	12H10, 
	39A10, 
	13P25, 
	14Q20, 
	68Q40, 
	03D35. 
	
			{\em Key words and phrases}: Algebraic difference equations, solutions in sequences, undecidability, difference Nullstellensatz, radical difference ideal membership problem.}
    We study solutions of difference equations in the rings of sequences and, more generally, solutions of equations with a monoid action in the ring of sequences indexed by the monoid.
    This framework includes, for example, difference equations on grids (e.g., standard difference schemes) and difference equations in functions on words.
    
    On the universality side, we prove a version of strong Nullstellensatz for such difference equations under the assuption that the cardinality of the ground field is greater than the cardinality of the monoid and construct an example showing that this assumption cannot be omitted.
    
    On the undecidability side, we show that the following problems are undecidable:
    \begin{itemize}
      \item testing radical difference ideal membership or, equivalently, determining whether a given difference polynomial vanishes on the solution set of a given system of difference polynomials;
      \item determining consistency of a system of difference equations in the ring of real-valued sequences;
      \item determining consistency of a system of equations with action of $\mathbb{Z}^2$, $\mathbb{N}^2$, or the free monoid with two generators in the corresponding ring of sequences over any field of characteristic zero.
    \end{itemize}
\end{abstract}

\section{Introduction}


An ordinary difference ring $(A,\sigma)$ 
is a commutative ring $A$ equipped with a 
distinguished ring endomorphism 
$\sigma:A \to A$.  The most basic example of a 
difference ring is the 
ring  $\mathbb{C}^\mathbb{N}$ of sequences
of complex numbers with 
$\sigma$ defined by $(a_i)_{i \in \mathbb{N}} 
\mapsto (a_{i+1})_{i \in \mathbb{N}}$.  More 
generally, if $\phi:X \to X$ is any 
self-map on a set $X$ and $A$ is the ring of 
complex valued functions on $X$, then $\sigma:A \to A$
defined by $f \mapsto f \circ \phi$ is a 
difference ring.   The special case where $X = \mathbb{R}$ is
the real line and $\phi$ is given by $\phi(x) = x  + 1$ 
gives the operator defined by $f(t) \mapsto f(t+1)$
 and explains the origin of the name 
``difference ring'' in that the discrete difference
operator $\Delta$ defined by 
$f(t) \mapsto f(t+1) - f(t)$ may be expressed as 
$\Delta = \sigma - \operatorname{id}$.    Generalizing 
to allow for additional operators, we might consider
partial difference rings $(A,\sigma_1,\ldots,\sigma_n)$
with several distinguished ring endomorphisms
$\sigma_j:A \to A$.  Natural instances of such 
partial difference rings with commuting operators
include rings of sequences indexed by $n$-tuples
of natural numbers and the rings of $n$-variable 
functions.  There are also natural examples of 
such partial difference rings with non-commuting 
difference operators coming from number theory, 
the theory of iterated function systems, and 
symbolic dynamics.  

We may think of a partial 
difference ring $(A,\sigma_1,\ldots,\sigma_n)$
as the ring $A$ given together with an 
action by ring endomorphisms of $M_n$, the 
free monoid on $n$ generators.   If we 
require that these operators commute, then 
this may be seen as an action by $\NN^n$.  
Likewise, if we require that the operators 
are, in fact, ring automorphisms, then it is an 
action by $F_n$, the free group on $n$-generators.

As with algebraic 
and differential equations, the most basic 
problems for difference equations come down 
to solving these equations in some specified
difference ring. As a preliminary, difficult
subproblem, one must determine whether the equations
under consideration admit any solutions at all.   
In the optimal cases, solvability of a system of 
equations is equivalent to a suitable Nullstellensatz
in some associated ring of polynomials 
(respectively, differential polynomials or
difference polynomials).   While in the case of 
polynomial equations in finitely many variables these 
problems admit well known solutions, for difference 
and differential equations and their relatives, there
are subtle distinctions between those problems which may 
be solved and those for which no algorithm exists.   

In many cases, the problems we are considering may be 
resolved by analyzing the associated first-order theories.  
The prototypical decidability theorems for equations
are Tarski's theorems on the decidability and 
completeness of the theories of real closed fields 
and of algebraically closed fields of a 
fixed characteristic~\cite{Tarski}.     This logical theorem is 
complemented algebraically by Hilbert's Nullstellensatz
which gives a precise sense in which implications for
systems of polynomial equations may be expressed in 
terms of ideal membership problems.

Theorems analogous to Tarski's
are known for difference and differential
\emph{fields}.   The theories of difference fields,
of differential fields of characteristic zero, and 
even of partial differential fields of characteristic
zero and of difference-differential fields of 
characteristic zero are known to have model companions (see~\cite{ChHr99, C15, BM07, Sanchez16}).
Moreover, for each of these theories, quantifier 
simplification theorems (and even full quantifier
elimination theorems in the case of differential 
fields) are known.  From these results one may deduce
on general grounds the existence of algorithms for 
determining the consistency of systems of 
difference (respectively, differential or 
difference-differential) equations in such fields 
and explicit, if not always efficient, such algorithms
may be extracted from the more geometric presentations
of the axioms.  Better algorithms based on 
characteristic set methods are known~\cite{KolchinBook, GLY2009, Gao2009}.

From the algebraic point of view, the consistency
checking problem may be expressed in terms of some 
form of a Nullstellensatz.   For example, the weak 
form of the classical Nullstellensatz of Hilbert 
says that if $K$ is an algebraically closed field and 
and $f_1, \ldots, f_\ell \in K[x_1, \ldots, x_n]$ is a 
sequence of polynomials in the finitely many variables
$x_1, \ldots, x_n$ then the system of equations 
\begin{equation}
\label{algeq}
f_1(\mathbf{x}) = \cdots = f_\ell(\mathbf{x}) = 0
\end{equation}
(where we have written $\mathbf{x} = (x_1, \ldots, x_n)$) 
has a solution in $K$ if and only if $1$ does \emph{not}
belong to the ideal $\langle f_1, \ldots, f_\ell \rangle$ generated 
by $f_1, \ldots, f_\ell$.
The latter condition can be verified by a linear algebra computation (see~\cite{Jelonek} and references therein).

Hilbert's Nullstellensatz takes a stronger form in that 
one may reduce implications between systems of 
equations to explicit computations in polynomial
rings.  That is, given equations as above and 
$g \in K[\mathbf{x}]$ any polynomial, then $g$ vanishes
on every solution to Equation~\eqref{algeq} if and 
only if $g \in \sqrt{\langle f_1, \ldots, f_\ell\rangle}$, the 
radical of the ideal generated by $f_1, \ldots, 
f_\ell$ .   
Similar results are known 
for equations in differential, difference, and 
difference-differential fields.
The situation is murkier if we consider partial difference equations, that is, 
difference equations with respect to several distinguished ring 
endormorphisms.  
It is noted in~\cite{Hr:MM} that the theory of difference fields with 
respect to finitely many distinguished endomorphisms has a 
model companion, and, in fact, a simple variant of the method
for determining the consistency of systems of difference equations
for ordinary difference equations extends to this case of partial difference 
equations.  However, if the distinguished endomorphisms are required to 
commute, then no such model companion exists~\cite{Kikyo}.  

Rings of sequences are among the most natural places to look for solutions
of difference equations.
In particular, algorithms for detecting the solvability of finite
systems of difference equations in sequence rings are 
available~\cite{difference_elimination}.
However, the general problem of solving equations in sequences is much more complicated than the analogous problem for difference fields:
whenever $K$ is infinite, the
first-order theory of the sequence ring $K^\NN$ 
regarded in the language of difference rings is 
undecidable~\cite[Proposition~3.5]{HrPo}.    

The staring point for us was a recent paper~\cite{difference_elimination} that contains the following results about solving difference equations in sequences:
\begin{itemize}
    \item \emph{The weak Nullstellensatz}~\cite[Theorem~7.1]{difference_elimination}: for any algebraically closed difference field $(K,\sigma)$ and
a finite set $S$ of difference equations over $K$, there is a 
solution in $K^\NN$ to the system $S$ if and
only if the difference ideal generated by $S$ is proper;
    \item An effective bound~\cite[Theorem~3.4]{difference_elimination} that yields an \emph{algorithm} for deciding whether a difference ideal given by its generators is proper and, consequently, an \emph{algorithm} for deciding consistency of a finite system of difference equations in $K^{\NN}$.
\end{itemize}
Remarkably, while the proof of the weak difference Nullstellensatz
is rather routine for $K$ uncountable, the result holds 
for arbitrary $K$. 

In this paper, we answer several natural questions aimed at extending the above results about solving difference equations in sequences.
\begin{question}[weak Nullstellensatz $\to$ strong Nullstellensatz]
  If $f_1, \ldots, f_\ell$, and $g$ are difference polynomials over an 
algebraically closed difference field $K$ and $g$ vanishes on every 
solution to the system of difference equations $f_1(\mathbf{x}) = 
\cdots = f_\ell(\mathbf{x}) = 0$ in $K^\mathbb{N}$, must 
$g$ belong to the radical of the difference ideal generated by 
$f_1, \ldots, f_\ell$?   
\end{question}

\begin{answer}
Depends on the cardinality of $K$ (Theorems~\ref{thm:nullstellensatz_over_constants} and~\ref{theo:counter strong_in_main}).
\end{answer}

\noindent
More precisely, we show that the answer is Yes if $K$ is uncountable (Theorem~\ref{thm:nullstellensatz_over_constants}) and give an example that shows that the answer is No for $K = \bar{\mathbb{Q}}$ (Theorem~\ref{theo:counter strong_in_main}).
It is interesting to compare this result with the weak Nullstellensatz~\cite[Theorem~7.1]{difference_elimination} that holds for a ground field of any cardinality but the proof for the countable case is much harder than the proof for the uncountable case.

\begin{question}[testing consistency $\to$ testing radical difference ideal membership]
  Is there an algorithm that, given difference polynomials $f_1, \ldots, f_\ell$, and $g$, decides whether $g$ belongs to the radical difference ideal generated by $f_1, \ldots, f_\ell$?
\end{question}

\begin{answer}
No (Theorem~\ref{thm:stong_undecidable}).
\end{answer}

\noindent
This result contrasts not only with the existence of an algorithm for this problem if $g = 1$ (see \cite[Theorem~3.4]{difference_elimination}) but also with the decidability of the membership problem for radical differential ideals~\cite[p. 110]{Ritt}.
Furthermore, we are aware of only one prior undecidability result for the membership problem in the context of differential/difference algebra~\cite{Umirbaev:AlgorithmicProblemsForDifferentialPolynomialAlgebras}, and this result holds if one considers not necessarily radical ideals and at least two derivations.

\begin{question}[not necessarily algebraically closed $K$]
  Is there an algorithm that, given difference polynomials $f_1, \ldots, f_\ell$ over $\mathbb{R}$, decides whether the system $f_1 = \ldots = f_\ell = 0$ has a solution in $\mathbb{R}^\NN$?
\end{question}

\begin{answer}
No (Theorem~\ref{thm:real}).
\end{answer}

\noindent
Moreover, Theorem~\ref{thm:real} shows that the answer is No if we replace $\mathbb{R}$ with any subfield of $\mathbb{R}$ (including $\mathbb{Q}$).
Again, the situation is different compared to the differential case: the problem of deciding the existence of a real analytic solution of a system of differential equations over $\mathbb{Q}$ is decidable~\cite[\S 4]{S78}.

\begin{question}[index monoids other than $\NN$ or $\mathbb{Z}$]
  Is there an algorithm for deciding consistency of systems of difference equations with respect to actions of $\NN^2$ or the free monoid with two generators when the solutions are sought in the sequences indexed by the corresponding monoid?
\end{question}

\begin{answer}
No (Propositions~\ref{prop:2d} and~\ref{prop:free_monoid}).
\end{answer}

\noindent
Notably, the problem of the solvability of equations in 
the free monoid itself is decidable~\cite{Makanin}.

One of the crucial technical ingredients (used to prove Theorems~\ref{theo:counter strong_in_main} and~\ref{thm:stong_undecidable} and Proposition~\ref{prop:free_monoid}) is Lemma~\ref{lemma: equivalence iteration and strong Nullstellensatz} that connects the membership problem for a radical difference ideal to a problem of Skolem-Mahler-Lech~\cite[\S~2.3]{RecSeq} type for piecewise polynomial maps.
For related undecidability results for dynamical systems associated with other types of maps, see~\cite{M90, KM99, BP18} and references therein.


\section{Preliminaries}

Throughout the paper, $\mathbb{N}$ denotes the set of non-negative integers.

\subsection{Difference rings and equations}

The main objects of the paper are difference equations and their generalizations.
A detailed introduction to difference rings can be found in \cite{Cohn,LevinBook}.
\begin{definition}[Difference rings]
A {\em difference ring} is a pair $(A,\sigma)$ where
$A$ is a commutative ring and $\sigma: A \to A$
is a ring endomorphism.  
 We often abuse notation saying that $A$ is a difference ring when we mean the pair $(A,\sigma)$.   
\end{definition}

The following example of a difference ring will be central in this paper.

\begin{example}[Ring of sequences]\label{ex:sequences}
  If $R$ is any commutative ring, then the sequence rings $R^{\mathbb{N}}$ and $R^{\mathbb{Z}}$ (with componentwise addition and multiplication) are difference rings with 
  $\sigma$ defined by $\sigma ( (x_i)_{i \in \mathbb{N}}) := ( x_{i+1} )_{i \in \mathbb{N}}$ 
  ($\sigma ( (x_i)_{i \in \mathbb{Z}}):= (x_{i+1} )_{i \in \mathbb{Z}}$, respectively).
\end{example}

\begin{definition}[Difference polynomials]\label{def:diff_poly}
Let $A$ be a difference ring.
\begin{itemize}
  \item The free difference $A$-algebra in one generator $X$ over $A$ also called the {\em ring of difference polynomials} in $X$ over $A$, may be realized as the ordinary polynomial ring , $A[ \sigma^j(X) \mid j \in \mathbb{N} ]$, in the indeterminates $\{ \sigma^j(X) \mid j \in \mathbb{N} \}$ with the action $\sigma( \sigma^j(X) ) := \sigma^{j + 1}(X)$. 
  \item Similarly, for $\bm{X} = (X_1, \ldots, X_n)$, one obtains the difference polynomial ring $A[ \sigma^j(\bm{X}) \mid j \in \mathbb{N} ]$ in $n$ variables. 
\end{itemize}
\end{definition}

\begin{definition}
If $(A,\sigma)$ is a difference ring and $F \subseteq A [ \sigma^j(\bm{X}) \mid j \in \mathbb{N} ]$ where $\bm{X} = (X_1, \ldots, X_n)$
is a set of difference polynomials over $A$,
$(A,\sigma) \to (B,\sigma)$ is a map of difference rings, and 
$\mathbf{x} = (x_1, \ldots, x_n) \in B^n$ is an $n$-tuple from $B$, then we 
say that $\mathbf{x}$ is a {\em solution} of the
system $F = 0$ if, under the unique map of difference rings $A [ \sigma^j(\bm{X}) \mid j \in \mathbb{N} ] \to B$ given by extending the given map $A \to B$ and sending $X_i \mapsto x_i$ for $1 \leqslant i \leqslant n$, every element of $F$ is sent to $0$.
\end{definition}

\begin{example}[Fibonacci numbers]
  Consider the Fibonacci sequence $\bm{f} := (1, 1, 2, 3, 5, \ldots) \in \mathbb{C}^{\mathbb{N}}$.
  Then the fact that the sequence satisfies a recurrence $f_{n + 2} = f_{n + 1} + f_n$ can be expressed by saying that $\bm{f}$ is a solution of a difference equation $\sigma^2(X) - \sigma(X) - X = 0$, where $\sigma^2(X) - \sigma(X) - X \in \mathbb{C}[ \sigma^j(X)\mid j \in \mathbb{N} ]$.
\end{example}


\subsection{Rings with a monoid action and equations}

In this paper, we will often be interested in rings of ``sequences'' that would generalize Example~\ref{ex:sequences} to sequences indexed by $\mathbb{Z}^2$ (e.g., difference schemes for PDEs) or any other semigroup.

\begin{definition}[$M$-rings]
Let $M$ be a monoid. 
    A pair $(A, \sigma)$ where $A$ is a commutative ring and $\sigma$ is an action of $M$ on $A$ by endomorphisms is called an \emph{$M$-ring}.
    For every $a \in A$ and $m \in M$, we define the image of $a$ under the endomorphism corresponding to $m$ by $\sigma^m(a)$.
\end{definition}

We note that every difference ring is an $\mathbb{N}$-ring for the monoid $(\mathbb{N}, +)$. A morphism of $M$-rings is a morphism of rings that commutes with the $M$-action.

\begin{example}[Rings of sequences indexed by $\mathbb{N}^2$ and $\mathbb{Z}^2$]\label{ex:seqN2}
  If $R$ is any commutative ring, then the rings $R^{\mathbb{N}^2}$ and $R^{\mathbb{Z}^2}$ are $\mathbb{N}^2$-rings with 
  $\sigma$ defined by 
  \[
    \sigma^{(1, 0)} \bigl( (x_{i, j})_{i, j \in \mathbb{N}} \bigr) := (x_{i + 1, j})_{i, j \in \mathbb{N}} \quad \text{ and } \quad \sigma^{(0, 1)} \bigl( (x_{i, j})_{i, j \in \mathbb{N}} \bigr) := (x_{i, j + 1})_{i, j \in \mathbb{N}}.
  \]
  The action on $R^{\mathbb{Z}^2}$ is defined analogously.
\end{example}

\begin{example}\label{ex:seq_general}
In general, if $R$ is a commutative ring and $M$ a monoid, then the ring $R^M$ of \emph{$M$-sequences} is the commutative ring of all maps from $M$ to $R$ (with componentwise addition and multiplication) and action given by 
$$\sigma^m((x_\ell)_{\ell \in M})=(x_{\ell m})_{\ell\in M}$$
for $m\in M$.
\end{example}

The following example is a special case of Example~\ref{ex:seq_general}.

\begin{example}[Functions on words]
  Let $\Sigma$ be a finite alphabet.
  By $(\Sigma^{\ast}, \cdot)$ we denote the monoid of all words in $\Sigma$ with the operation of concatenation.
  Let $R$ be a commutative ring. 
  Consider the ring of functions $R^{\Sigma^\ast}$ from $\Sigma^\ast$ to $R$ that we will identify with the ring of $\Sigma^\ast$-indexed sequences. 
  Then $R^{\Sigma^\ast}$ can be endowed with a structure of $\Sigma^\ast$ ring as follows
  \[
    \sigma^w\bigl( (x_u)_{u \in \Sigma^\ast} \bigr) := (x_{uw})_{u \in \Sigma^\ast} \text{ for every } w \in \Sigma^\ast.
  \]
\end{example}

\begin{definition}[$M$-polynomials]\label{def:M_poly}
We fix a monoid $M$. Let $A$ be an $M$-ring.
\begin{itemize}
  \item The free $M$-algebra over $A$ in one generator $X$ over $A$ also called the {\em ring of $M$-polynomials} in $X$ over $A$, may be realized as the ordinary polynomial ring , $A[ \sigma^m(X) \mid m \in M ]$, in the indeterminates $\{ \sigma^m(X) \mid m \in M \}$ with the action $\sigma^{m_1}( \sigma^{m_2}(X) ) := \sigma^{m_1m_2}(X)$ for every $m_1, m_2 \in M$. 
  \item Similarly, for $\bm{X} = (X_1, \ldots, X_n)$, one obtains the ring of $M$-polynomials $A[ \sigma^m(\bm{X}) \mid m \in M ]$ in $n$ variables. 
\end{itemize}
\end{definition}

\begin{definition}
We fix a monoid $M$.
If $(A, \sigma)$ is an $M$-ring and $F \subseteq A [ \sigma^m(\bm{X}) \mid m\in M]$ where $\bm{X} = (X_1, \ldots, X_n)$
is a set of $M$-polynomials over $A$,
$(A,\sigma) \to (B,\sigma)$ is a map of $M$-rings, and 
$\mathbf{x} = (x_1, \ldots, x_n) \in B^n$ is an $n$-tuple from $B$, then we 
say that $\mathbf{x}$ is a {\em solution} of the
system $F = 0$ if, under the unique map of $M$-rings $A [\sigma^m(\bm{X}) \mid m \in M] \to B$ given by extending the given map $A \to B$ and sending $X_i \mapsto x_i$ for $1 \leqslant i \leqslant n$, every element of $F$ is sent to $0$. For $f\in A [ \sigma^m(\bm{X}) \mid m\in M]$ we denote the image of $f$ under the above map by $f(\mathbf{x})$.
\end{definition}

\begin{example}[Discrete harmonic functions]
Consider a $\mathbb{C}$-valued function $\bm{x} = (x_{i, j})_{i, j \in \mathbb{Z}^2}$ on the integer lattice. 
It is called a discrete harmonic function~\cite{H49} if, for every $i, j \in \mathbb{Z}^2$, $4x_{i, j} = x_{i + 1, j} + x_{i - 1, j} + x_{i, j + 1} + x_{i, j - 1}$.
The fact that it is a discrete harmonic function can be expressed by the fact that it is a solution of the following $\mathbb{Z}^2$-polynomial
  \[
     4X - \sigma^{(1, 0)}(X) - \sigma^{(-1, 0)}(X) - \sigma^{(0, 1)}(X) - \sigma^{(0, -1)}(X) \in \mathbb{C}[\sigma^m(X) \mid m\in\mathbb{Z}^2 ].
  \]
\end{example}

\begin{example}
Let $M = \{a, b\}^\ast$ be a monoid of binary words with respect to concatenation.
Then the fact that a function $d \colon M \to \mathbb{R}$ is a martingale~\cite[p. 2]{Schnorr} can be expressed by the fact that $d$ is a solution of the following $M$-polynomial
\[
  X - \frac{1}{2}\sigma^a(X) - \frac{1}{2}\sigma^b(X) \in \mathbb{C}[\sigma^m(X) \mid n \in M].
\]
\end{example}


\section{Main results}

\subsection{Universality of sequence rings}\label{subsec:universality}

Let $M$ be a monoid, let $k$ be a field, and let $\bm{X} = (X_1, \ldots, X_n)$. 
For a subset $F$ of $k[\sigma^m(\bm{X}) \mid m \in M]$, we let
\[
  \mathcal{V}(F) = \{ \mathbf{x}\in (k^M)^n \mid f(\mathbf{x})=0\ \forall \ f\in F\}
\]
denote the set of solutions of $F$ in $k^M$ and
for a subset $S$ of $(k^M)^n$, we let
\[
\mathcal{I}(S) = \{f\in k[\sigma^m(\bm{X}) \mid m \in M] \mid f(\bm{x})=0\ \forall \ \bm{x}\in S \}
\]
denote the set of all $M$-polynomials vanishing on $S$.

\begin{theorem}[Strong Nullstellensatz]\label{thm:nullstellensatz_over_constants}
	Let $M$ be a monoid, let $k$ be an algebraically closed field such that $|k| > |M|$, and let $\bm{X} = (X_1, \ldots, X_n)$. 
	Then, for every subset $F$ of $k[\sigma^m(\bm{X}) \mid m \in M]$, we have
	\[
	\mathcal{I}(\mathcal{V}(F))=\sqrt{ \langle \sigma^m(F) \mid m \in M \rangle }.
	\]
\end{theorem}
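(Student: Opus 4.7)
I would start with the easy inclusion $\sqrt{\langle \sigma^m(F) \mid m \in M \rangle} \subseteq \mathcal{I}(\mathcal{V}(F))$, which needs only that evaluation at any $\bm{x} \in (k^M)^n$ is an $M$-ring homomorphism (so $M$-translates of polynomials vanishing at $\bm{x}$ also vanish) and that $k^M$ is reduced (so membership in the radical still forces vanishing componentwise). Both facts are immediate.

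For the reverse inclusion, set $R := k[\sigma^m(\bm{X}) \mid m \in M]$ and $I := \langle \sigma^m(F) \mid m \in M \rangle$ and take $g \in R \setminus \sqrt{I}$. The plan is a Rabinowitsch-style construction: since $g$ is not nilpotent modulo $I$, the localization $(R/I)[g^{-1}]$ is nonzero, hence has a maximal ideal. Composing the natural map $R \to (R/I)[g^{-1}]$ with the quotient by such a maximal ideal produces a $k$-algebra homomorphism $\psi \colon R \to K$ satisfying $\psi(I) = 0$ and $\psi(g) \neq 0$, where $K$ is the resulting residue field. From $\psi$ I would build a candidate solution $\bm{x} \in (K^M)^n$ by setting the $\ell$-th entry of $x_i$ to be $\psi(\sigma^\ell(X_i))$. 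Using $\sigma^\ell(\sigma^m(X_i)) = \sigma^{\ell m}(X_i)$, a direct computation gives $(f(\bm{x}))_\ell = \psi(\sigma^\ell(f))$ for every $f \in R$ and $\ell \in M$, whence $\bm{x} \in \mathcal{V}(F)$ (because $\sigma^\ell(F) \subseteq I$) while the entry of $g(\bm{x})$ at the identity of $M$ equals $\psi(g) \neq 0$.

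For this construction to produce a solution in $(k^M)^n$ rather than $(K^M)^n$, I need $K = k$, and this is the only step where I expect real work. When $M$ is finite, $R$ is a polynomial ring in finitely many variables and $K = k$ follows from the classical Hilbert Nullstellensatz. When $M$ is infinite, $K$ is generated as a $k$-algebra by the images of the $\sigma^m(X_i)$ together with $g^{-1}$, a set of cardinality $|M|\cdot n + 1 = |M|$, so a count of monomials bounds $\dim_k K \leq |M|$. On the other hand, if some $t \in K$ were transcendental over $k$, then the classical fact that $\{(t - a)^{-1} \mid a \in k\}$ is $k$-linearly independent in $k(t) \subseteq K$ would force $\dim_k K \geq |k|$. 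The hypothesis $|k| > |M|$ rules this out, so $K/k$ is algebraic, and algebraic closedness of $k$ gives $K = k$. Everything else reduces to routine bookkeeping around this cardinality-dimension step.
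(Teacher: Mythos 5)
Your proof is correct, and its skeleton matches the paper's: the easy inclusion is handled the same way, and the hard inclusion in both cases comes down to producing, for $g\notin\sqrt{I}$, a point of $\mathcal{V}(F)$ in $(k^M)^n$ on which $g$ survives, using the identity (which you state as $(f(\bm{x}))_\ell=\psi(\sigma^\ell(f))$ and the paper states as the identification $\mathcal{V}(I)=\mathbb{V}(I)$ under $(k^M)^n\cong k^{\mathbf{Y}}$ with $\mathbf{Y}=\{\sigma^m(\bm{X})\mid m\in M\}$) that a point of the affine space indexed by all the variables $\sigma^m(X_i)$ killing the whole invariant ideal $I$ is the same thing as an $M$-sequence solution. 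The one genuine difference is how the key input is obtained: the paper black-boxes it as a strong Nullstellensatz for polynomial rings in a set of variables of cardinality less than $|k|$ (its Lemma on $\mathbb{I}(\mathbb{V}(F))=\sqrt{\langle F\rangle}$, cited to Lang), whereas you re-derive exactly that statement inline via the Rabinowitsch localization $(R/I)[g^{-1}]$, a maximal ideal, and the cardinality form of Zariski's lemma ($\dim_k K\leq|M|<|k|$ versus the $|k|$-many independent elements $(t-a)^{-1}$ for $t$ transcendental). Your version is self-contained where the paper's is modular; the mathematical content of the crucial step is the same, and your cardinality bookkeeping (including the separate treatment of finite $M$ by the classical Nullstellensatz) is sound.
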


The following theorem shows that the condition $|k| > |M|$ in Theorem~\ref{thm:nullstellensatz_over_constants} cannot be omitted.

\begin{theorem} \label{theo:counter strong_in_main}
	There exists a finite set $F$ of difference equations over $\overline{\mathbb{Q}}$ such that
	\[
	\mathcal{I}(\mathcal{V}(F))\supsetneqq \sqrt{\langle \sigma^i(F) \mid i\in\mathbb{N}\rangle}.
	\]
\end{theorem}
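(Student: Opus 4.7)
My plan is to exhibit an explicit finite system $F$ of difference equations over $\overline{\mathbb{Q}}$ together with a difference polynomial $g$ such that $g \in \mathcal{I}(\mathcal{V}(F))$ but $g \notin \sqrt{\langle \sigma^i(F) \mid i \in \mathbb{N}\rangle}$. The strategy is to invoke Lemma~\ref{lemma: equivalence iteration and strong Nullstellensatz} (announced in the introduction), which reformulates membership in the radical difference ideal as a statement about orbits of an associated piecewise polynomial dynamical system $\phi$ on some affine space over $\overline{\mathbb{Q}}$. Under this translation, the strict inclusion reduces to the following dichotomy: every forward orbit of $\phi$ starting at a point of $\overline{\mathbb{Q}}^n$ eventually enters a prescribed Zariski-closed subset $V$, while some formal orbit (in a suitable difference ring extension of $\overline{\mathbb{Q}}$) avoids $V$ forever.

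The first step is therefore to construct such a pair $(\phi, V)$. The construction must exploit the countability of $\overline{\mathbb{Q}}$: arithmetic restrictions on algebraic numbers (such as those underlying the Skolem--Mahler--Lech phenomena of~\cite[\S~2.3]{RecSeq} or Northcott-style height bounds) can force every algebraic orbit into $V$, even though these restrictions evaporate once transcendental parameters are allowed. A natural candidate combines a few linear recurrences with an additional algebraic relation that only holds on orbits whose parameters lie in $\overline{\mathbb{Q}}$; it is here that the difference between the countable and uncountable cases of Theorem~\ref{thm:nullstellensatz_over_constants} manifests itself.

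Having constructed $(\phi, V)$, I would use Lemma~\ref{lemma: equivalence iteration and strong Nullstellensatz} to read off the explicit $F$ and $g$, and then verify the two required properties separately. For the inclusion $g \in \mathcal{I}(\mathcal{V}(F))$, I must check that $g$ vanishes on every sequence solution of $F$ in $\overline{\mathbb{Q}}^{\mathbb{N}}$, which by the lemma amounts to the arithmetic statement that every $\overline{\mathbb{Q}}$-valued orbit enters $V$. For $g \notin \sqrt{\langle \sigma^i(F)\rangle}$, I would produce an orbit of $\phi$ in a suitable difference ring extension of $\overline{\mathbb{Q}}$, obtained by adjoining a generic transcendental element, such that the orbit avoids $V$ forever; this formal orbit gives rise (again via the lemma) to a prime difference ideal containing $F$ but not $g$.

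The main obstacle is the first verification: showing that \emph{every} $\overline{\mathbb{Q}}$-valued orbit of $\phi$ is eventually trapped in $V$. This is precisely the arithmetic content that distinguishes the countable ground field from the uncountable case, and cannot be a purely formal consequence of the dynamics—otherwise Theorem~\ref{thm:nullstellensatz_over_constants} would already deliver the opposite inclusion. A rigorous proof will likely combine the Skolem--Mahler--Lech structure theorem for the zero sets of the underlying linear recurrences with a finiteness/pigeonhole argument reflecting the specific algebraic features of $\overline{\mathbb{Q}}$ (for instance, that every element is a root of a polynomial with rational coefficients, while a generic transcendental parameter is not).
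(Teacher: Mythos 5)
Your high-level architecture matches the paper's: reduce via Lemma~\ref{lemma: equivalence iteration and strong Nullstellensatz} to a statement about orbits of a piecewise polynomial map, and arrange that every orbit with parameters in $\overline{\mathbb{Q}}$ eventually hits a forbidden locus while an orbit with a transcendental parameter avoids it forever. (Minor point: in the lemma, $V$ is where the orbit \emph{starts}; the locus that must be avoided is the hyperplane $x_n=0$.) But the heart of the proof --- the actual construction of the dynamical system --- is absent from your proposal, and the ingredients you suggest for it point in the wrong direction. You propose ``a few linear recurrences with an additional algebraic relation'' and expect to verify the trapping property by combining the Skolem--Mahler--Lech theorem with Northcott-style height bounds. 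Neither is the relevant mechanism: a linear recurrence with algebraic initial data has no special propensity to vanish compared with one whose data are transcendental, and Skolem--Mahler--Lech constrains the structure of zero sets without distinguishing algebraic from transcendental parameters. Nothing in the proposal indicates how such a system would force \emph{every} $\overline{\mathbb{Q}}$-valued orbit into the forbidden locus, which you yourself identify as ``the main obstacle''; that obstacle is the entire content of the theorem.

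What the paper actually does is elementary in its arithmetic input but requires an explicit and somewhat delicate construction: a piecewise polynomial map $\mathbf{p}\colon\mathbb{A}^5_{\mathbb{Q}}\to\mathbb{A}^5_{\mathbb{Q}}$ which, started at $(c,0,0,0,1)$, uses three bookkeeping coordinates to loop over all $N\in\mathbb{N}$ and compute their base-$3$ expansions, thereby producing in the fifth coordinate the values $P_N(c)$, where $(P_N)_{N\in\mathbb{N}}$ enumerates, up to sign, all nonzero polynomials in $\mathbb{Z}[x]$ (Lemmas~\ref{lem:possibe_P_N} and~\ref{lemma: describe x5}). The fifth coordinate then vanishes at some step if and only if $c$ is algebraic --- this is just the definition of algebraicity, with no need for Skolem--Mahler--Lech or height machinery. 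Until you supply a concrete map together with a proof of the trapping property, your text is a plan rather than a proof.
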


\begin{remark}[Weak Nullstellensatz]
  Theorems~\ref{thm:nullstellensatz_over_constants} and~\ref{theo:counter strong_in_main} complement the weak Nullstellensatz from~\cite{difference_elimination} in a surprising way.
  Theorem~7.1 in~\cite{difference_elimination} established the weak Nullstellensatz for $M = \mathbb{N}$, that is,
  \[
    \mathcal{I}(\mathcal{V}(F))= \varnothing \iff 1 \in \sqrt{ \langle \sigma^m(F) \mid m \in M \rangle }
  \]
  without any restrictions on the cardinality of $k$. 
  However, the proof for the case of uncountable $k$ (see~\cite[Proposition~6.3]{difference_elimination}) was much simpler than the proof of the general statement.
  Our results indicate that this difference between the countable and uncountable cases is not an artifact of the proof in~\cite{difference_elimination} but rather a conceptual distinction.
\end{remark}

\begin{corollary}[Universality of the ring of sequences]
  Let $M$ be a monoid, let $k$ be an algebraically closed field such that $|k| > |M|$, and let $\bm{X} = (X_1, \ldots, X_n)$. 
	Then, for every subset $F$ of $k[\sigma^m(\bm{X}) \mid m \in M]$ and $g \in k[\sigma^m(\bm{X}) \mid m \in M]$ the following are equivalent:
	\begin{itemize}
	    \item $g=0$ holds for every solution of $F=0$ in any reduced $M$-ring containing $k$;
	    \item $g=0$ holds for every solution of $F = 0$ in $k^M$.
	\end{itemize}
\end{corollary}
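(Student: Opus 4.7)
The plan is to reduce the corollary directly to Theorem~\ref{thm:nullstellensatz_over_constants}. The implication from the first bullet to the second is immediate: the sequence ring $k^M$ is itself a reduced $M$-ring containing $k$ via the diagonal embedding $a \mapsto (a)_{m \in M}$ (which is $M$-equivariant because this embedding lands in the constants, and constants are fixed by every $\sigma^m$ acting on $k^M$). Hence any $g$ that vanishes on $F$-solutions in every reduced $M$-ring containing $k$ in particular vanishes on $F$-solutions in $k^M$.

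For the converse, I would invoke Theorem~\ref{thm:nullstellensatz_over_constants} to convert the vanishing hypothesis on $\mathcal{V}(F) \subseteq (k^M)^n$ into membership in the radical ideal: there exist $N \geqslant 1$, elements $h_1,\ldots,h_r \in k[\sigma^m(\bm{X}) \mid m \in M]$, monoid elements $m_1,\ldots,m_r \in M$, and polynomials $f_1,\ldots,f_r \in F$ with $g^N = \sum_{i} h_i \cdot \sigma^{m_i}(f_i)$. Now let $B$ be any reduced $M$-ring containing $k$ and let $\mathbf{x} \in B^n$ satisfy $f(\mathbf{x}) = 0$ for every $f \in F$. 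The evaluation map $k[\sigma^m(\bm{X}) \mid m\in M] \to B$ sending $X_i \mapsto x_i$ is a morphism of $M$-rings by the construction in Definition~\ref{def:M_poly}, so $\sigma^{m_i}(f_i)(\mathbf{x}) = \sigma^{m_i}(f_i(\mathbf{x})) = \sigma^{m_i}(0) = 0$. Evaluating the identity for $g^N$ at $\mathbf{x}$ gives $g(\mathbf{x})^N = 0$, and reducedness of $B$ upgrades this to $g(\mathbf{x}) = 0$.

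There is no real obstacle here; the whole content is carried by Theorem~\ref{thm:nullstellensatz_over_constants}. The only point to check carefully is that the evaluation homomorphism is $M$-equivariant, which is built into the definition of the free $M$-algebra, and that the diagonal inclusion $k \hookrightarrow k^M$ is a morphism of $M$-rings, which follows because the $M$-action on $k$ is trivial. No additional cardinality hypothesis beyond $|k|>|M|$ is needed.
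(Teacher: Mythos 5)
Your proposal is correct and follows essentially the same route as the paper: the paper's proof likewise applies Theorem~\ref{thm:nullstellensatz_over_constants} to obtain $g^e\in\langle\sigma^m(F)\mid m\in M\rangle$, evaluates at a solution in an arbitrary reduced $M$-ring containing $k$, and uses reducedness to conclude $g(\bm{x})=0$. Your additional remarks on the $M$-equivariance of the evaluation map and the constant embedding $k\hookrightarrow k^M$ just make explicit the (trivial) converse direction that the paper leaves unstated.
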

\begin{proof}
    If the latter point holds, then $g^e\in \langle \sigma^m(F) \mid m \in M \rangle $ for some $e\geq 1$ by Theorem \ref{thm:nullstellensatz_over_constants}. Thus for every solution $\bm{x}$ in some reduced $M$-ring containing $k$ we have $g(\bm{x})^e=0$ and therefore $g(\bm{x})=0$ as desired.
\end{proof}

\begin{remark}[Nonconstant $k$]
Moreover, we prove a more general theorem (Theorem \ref{thm:nullstellensatz general}) than Theorem \ref{thm:nullstellensatz_over_constants} where the field $k$ is not necessarily constant.  We also establish an alternative formulation of the strong difference Nullstellensatz that works without any assumptions on the base difference field $k$ (Theorem~\ref{thm: alternative strong Nullstellensatz}). 
\end{remark}


\subsection{Undecidability results}

\begin{theorem}\label{thm:real}
  For every field $k$ such that $k \subseteq \mathbb{R}$ and every computable subfield $k_0 \subset k$,
  the following problem is undecidable: given a finite system of difference equations with coefficients in $k_0$, determine whether it has a solution in $k^{\mathbb{N}}$ (resp., $k^{\mathbb{Z}}$).
\end{theorem}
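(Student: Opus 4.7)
The plan is to reduce Hilbert's tenth problem over the natural numbers (undecidable by Matiyasevich) to consistency of finite systems of difference equations in $k^{\mathbb{N}}$. Given $P\in\mathbb{Z}[z_1,\ldots,z_m]$, I aim to construct a finite system $F$ of difference polynomials over $\mathbb{Z}\subseteq k_0$ such that $F$ is solvable in $k^{\mathbb{N}}$ (resp.\ $k^{\mathbb{Z}}$) if and only if $P$ has a solution in $\mathbb{N}^m$. The forward direction is immediate: if $(z_1,\ldots,z_m)\in\mathbb{N}^m$ satisfies $P(z)=0$, then the constant sequences $Y_i=(z_i,z_i,\ldots)\in\mathbb{Q}^{\mathbb{N}}\subseteq k^{\mathbb{N}}$ simultaneously solve $\sigma(Y_i)-Y_i=0$ and $P(Y_1,\ldots,Y_m)=0$. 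So the whole difficulty is to design an \emph{integer-forcing gadget}: a finite system $G(C;\mathbf{U})$ in a distinguished constant variable $C$ (subject to $\sigma(C)=C$) and auxiliary variables $\mathbf{U}$ such that any solution in $k^{\mathbb{N}}$ forces the value of $C$ to be a non-negative integer, and every non-negative integer value of $C$ extends to some solution. Assembling $F$ then amounts to taking $m$ copies of $G$, one attached to each $Y_i$, together with the single equation $P(Y_1,\ldots,Y_m)=0$.

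To build the gadget I would exploit three features of $k^{\mathbb{N}}$ absent from the field $k$ itself: (i) idempotents $E^2=E$ are $\{0,1\}$-valued pointwise and, tightened by $E(1-\sigma(E))=0$, parameterize the monotone idempotents $\chi_{[N,\infty)}$ with $N\in\mathbb{N}\cup\{\infty\}$, so $E$ secretly carries a non-negative-integer parameter $N$; (ii) the shift $\sigma$ together with pointwise products lets one compare sequence values at different positions without ever naming a position by its index; and (iii) for $k\subseteq\mathbb{R}$ non-negativity is equational, $a\geq 0 \iff \exists b:\ a=b^2$. I would couple $C$ to such a monotone idempotent $E$ via an auxiliary counter $J$ with $\sigma(J)=J+1$ (so $J_n=J_0+n$), imposing equations whose intended solution is $J_0=0$ and $E=\chi_{[c,\infty)}$, forcing $c=N\in\mathbb{N}$.

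The main obstacle is that the two crucial side conditions, ``$E\neq 0$'' (equivalently, $N<\infty$) and ``$J_0=0$'' (pinning the counter to start at zero), are not themselves polynomial equations: the first is a non-vanishing statement, and the second refers to the value of a sequence at a specific coordinate, which the shift-invariant language of difference equations cannot isolate. My plan to circumvent both simultaneously is to combine the sum-of-squares characterization of non-negativity over $\mathbb{R}$ with the multiplicative structure of $k^{\mathbb{N}}$ in a single invertibility-type identity that \emph{fails}---rendering the system unsatisfiable---precisely in the bad cases $E=0$ or $J_0\neq 0$, while remaining satisfiable whenever $c\in\mathbb{N}$ with the intended $J_0=0$. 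A related integer-encoding technique underlies the undecidability of the full first-order theory of $K^{\mathbb{N}}$ obtained in~\cite{HrPo}; the technical challenge here is to execute such an encoding entirely inside the much weaker \emph{existential}, purely equational fragment of the theory. The $k^{\mathbb{Z}}$ case runs in parallel and is slightly easier because $\sigma$ is invertible, and the construction is coefficient-uniform in $k$ because the $F$ produced admits a $\mathbb{Q}^{\mathbb{N}}$-valued solution whenever it admits any $k^{\mathbb{N}}$-valued solution.
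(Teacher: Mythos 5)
Your overall architecture (reduce from Hilbert's tenth problem; the forward direction via explicit sequences is easy; all the work is in an integer-forcing gadget exploiting that over $\mathbb{R}$ non-negativity is the equation $a=b^2+\dots$) matches the paper's strategy. But the proposal stops exactly where the proof begins: the gadget is never constructed, and the two obstacles you correctly identify are fatal to the specific route you sketch. Since the solution set of a shift-invariant system in $k^{\mathbb{N}}$ is itself closed under the shift, no finite system can pin the counter $J$ to have $J_0=0$, nor can it couple a constant $C$ to the jump position $N$ of a monotone idempotent $E=\chi_{[N,\infty)}$ (shifting would force $C=N$ and $C=N-1$ simultaneously); and ``$E\neq 0$'' is an inequation, not an equation. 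Your proposed fix --- ``a single invertibility-type identity that fails precisely in the bad cases'' --- is an aspiration, not a construction, so the reduction is not established.

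The paper's resolution is different in kind and worth contrasting with your plan. It never forces a value at a \emph{specific} coordinate (which, as noted, is impossible); instead it forces a sequence $X$ to vanish at \emph{infinitely many} coordinates, a shift-invariant property, via the system $XY_1=0$, $\sigma(Y_2)=Y_2-1+Y_1$, $Y_2=Y_3^2+Y_4^2+Y_5^2+Y_6^2$: if $X$ were eventually nonzero, $Y_1$ would be eventually $0$, so $Y_2$ would decrease by $1$ forever and become negative, contradicting that it is a sum of squares in $\mathbb{R}$; the converse direction realizes the needed $Y_2$ by Lagrange's four-square theorem. Integrality then comes for free from $(\sigma(X_j)-X_j)^2=1$ (consecutive terms differ by $\pm1$) together with hitting $0$ infinitely often, and the reduction is completed by demanding that $X_0:=P(X_1,\ldots,X_n)$ also vanish infinitely often --- no constant variables and no idempotents are needed. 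If you want to salvage your constant-variable architecture, you would still need precisely this ``infinitely many zeroes'' lemma (e.g., to force $C-X$ to vanish somewhere for an integer-valued stepping sequence $X$), so supplying it is unavoidable; as it stands, the proposal has a genuine gap at its core step.
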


\begin{theorem}\label{thm:stong_undecidable}
  Let $M$ be $\mathbb{N}$ or $\mathbb{Z}$, let $k$ be a field of characteristic zero, and let $k_0 \subset k$ be a computable subfield.
  Then the following problem is undecidable: given a finite system of difference equations $F = 0$ and a difference equation $g = 0$ with coefficients in $k_0$, determine whether $g = 0$ holds for every solution on $F = 0$ in $k^M$.
\end{theorem}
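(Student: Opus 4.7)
The plan is to use Lemma~\ref{lemma: equivalence iteration and strong Nullstellensatz}, which translates the question ``does $g$ vanish on every solution of $F = 0$ in $k^M$?'' into a Skolem-Mahler-Lech-type reachability question about iterates of a piecewise polynomial self-map. Having made this translation, I would reduce the halting problem for two-counter (Minsky) machines to the resulting reachability question. The halting problem for Minsky machines is classically undecidable, and Minsky machines admit a faithful simulation by a piecewise polynomial (in fact piecewise affine with integer coefficients) self-map $\phi : \ZZ^3 \to \ZZ^3$: one coordinate for the instruction pointer, one for each counter, with branches separated by zero-tests on the counters and on the pointer value.

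Given a Minsky machine $\mathcal{M}$ and a starting configuration $v_0$, the core construction produces a finite system of difference equations $F = 0$ over $\ZZ \subseteq k_0$ and a difference polynomial $g$ over $\ZZ$ with the following three properties. First, $F$ forces the dynamics $\sigma(\bm{X}) = \phi(\bm{X})$ by taking products of the form $\prod_r \bigl(\sigma(X_i) - f_i^{(r)}(\bm{X})\bigr)$ over the finitely many branches $f^{(r)}$ of $\phi$, together with branch-selector relations (polynomial equalities encoding zero-tests on counters and the pointer value) that ensure exactly one branch is active at each position. Second, $F$ pins down the initial condition $v_0$; since difference equations are shift-invariant, this is done by adjoining an auxiliary coordinate whose only admissible sequences are $\{0,1\}$-valued with exactly one $1$ (described by a small set of shift-invariant polynomial identities such as $Y^2 = Y$ and $Y \cdot \sigma(Y) = 0$), together with coupling equations that force $\bm{X} = v_0$ at the unique reset position. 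Third, $g$ is taken to be the defining polynomial of the halting configuration, so that $g$ vanishes on the orbit of $\phi$ from $v_0$ precisely when $\mathcal{M}$ does not halt. Invoking Lemma~\ref{lemma: equivalence iteration and strong Nullstellensatz}, $g$ vanishes on every solution of $F = 0$ in $k^M$ if and only if $\mathcal{M}$ does not halt on $v_0$, so halting reduces to the nonvanishing version of the theorem's problem, yielding undecidability.

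The case $M = \ZZ$ is handled in parallel after extending $\phi$ to an invertible map (by storing the last instruction executed in the state, so that Minsky transitions become reversible) or by padding the orbit on the negative side with an absorbing pre-initialization segment; the same $F$ and $g$ work, now with $\sigma$ an automorphism. Since the whole reduction is a Turing reduction computed symbolically from $\mathcal{M}$ and $v_0$, it persists whenever $k_0$ is any computable subfield of a characteristic-zero field $k$.

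The main obstacle is the initialization step: because difference equations are shift-invariant, one cannot directly write ``at position $0$,'' so pinning $\bm{X} = v_0$ requires the auxiliary reset coordinate described above, and one must verify that every solution of the augmented system in $k^M$ truly encodes an orbit of $\phi$ emanating from $v_0$ rather than a spurious sequence that circumvents the halting detection. A secondary concern is compatibility with the precise form of Lemma~\ref{lemma: equivalence iteration and strong Nullstellensatz}: the lemma presumably requires $F$ to present $\phi$ as a single-valued dynamical system, so the branch-selector polynomials must in fact partition the state space, and this partitioning must be checked over all of $k^M$ rather than only on the integer orbit of interest. Both issues are handled by inserting sufficiently many compatibility equations (``exactly one selector holds'', ``selectors multiply to zero against the other branches'') so that the algebraic and dynamical descriptions coincide on every sequence solution.
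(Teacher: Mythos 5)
Your overall strategy --- encode an undecidable problem into the orbit of a piecewise polynomial map and then pass to difference equations via Lemma~\ref{lemma: equivalence iteration and strong Nullstellensatz} --- is exactly the paper's strategy. The only structural difference is the source of undecidability: the paper reduces from Hilbert's tenth problem, building (via Lemma~\ref{lemma: get Tn}, conjugated by all permutations) a piecewise polynomial map whose orbit enumerates all of $\mathbb{N}^n$ and whose last coordinate is $\prod_{\pi} P(\bm{x}_\pi)$, so that ``the last coordinate never vanishes'' means ``$P$ has no zero in $\mathbb{N}^n$''; you reduce from Minsky-machine halting by direct simulation. Either source works, and Minsky machines are indeed simulable by piecewise affine maps, provided you refine the branch conditions into a genuine partition of all of $\mathbb{A}^3$ by locally closed sets (including a default branch off the integer points), as you note.

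However, your specification of $g$ is wrong as written, and this is precisely where the semantics of the problem bites. In Lemma~\ref{lemma: equivalence iteration and strong Nullstellensatz} the halting test must be encoded into the \emph{last coordinate of the state}: the lemma's dichotomy is ``$x_{n,i} \neq 0$ for all $i \geq 1$'' versus not, and its $g$ is the specific difference polynomial $\sigma(U) - U$ attached to the reset variable, not a polynomial cutting out the halting configuration. If you instead take $g$ to be ``the defining polynomial of the halting configuration,'' then ``$g = 0$ holds for a solution'' means $g(\mathbf{x}_i) = 0$ for \emph{every} index $i$ (vanishing of a difference polynomial is shift-invariant), i.e., every state of the orbit is a halting state --- which is false for every solution as soon as $v_0$ is non-halting, so your criterion is constant and detects nothing. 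The fix is to add a state coordinate equal to, say, the instruction pointer minus the halt label, apply the lemma with $V = \{v_0\}$ and this as the distinguished last coordinate, and use the lemma's own $f_1,\ldots,f_\ell$ and $g$; one then gets ``$g$ vanishes on every solution iff $\mathcal{M}$ halts'' (the opposite direction from the one you state, though either yields undecidability). Finally, your auxiliary reset coordinate for initialization and the invertibility/padding discussion for $M = \mathbb{Z}$ are redundant: pinning $\mathbf{x}_0$ to $V$ and handling both $M = \mathbb{N}$ and $M = \mathbb{Z}$ are already built into Lemma~\ref{lemma: equivalence iteration and strong Nullstellensatz}.
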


\begin{corollary}\label{cor:strong_undecidability}
  Let $M$ be $\mathbb{N}$ or $\mathbb{Z}$, let $k$ be a field of characteristic zero, and let $k_0 \subset k$ be a computable subfield.
  Then the following problems are undecidable:
  \begin{enumerate}[label = \textbf{(P\arabic*)}]
      \item\label{prob:1ineq} Given $f_1, \ldots, f_{\ell}, g \in k_0[\sigma^m(\bm{X}) \mid m \in M]$ where $\bm{X} = (X_1, \ldots, X_n)$, determine whether the system $f_1 = \ldots = f_\ell = 0, g \neq 0$ has a solution in $k^{M}$.
      \item\label{prob:radical_ideal_membership} Given $f_1, \ldots, f_{\ell}, g \in k_0[\sigma^m(\bm{X}) \mid m \in M]$ where $\bm{X} = (X_1, \ldots, X_n)$, determine whether
      \[
        g \in \sqrt{ \langle \sigma^m(f_1), \ldots, \sigma^m(f_\ell) \mid m \in M \rangle }.
      \]
  \end{enumerate}
\end{corollary}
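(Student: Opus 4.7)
The plan is to handle~\ref{prob:1ineq} by direct complementation and~\ref{prob:radical_ideal_membership} by combining Theorem~\ref{thm:stong_undecidable} with Theorem~\ref{thm:nullstellensatz_over_constants} after enlarging the ground field. For~\ref{prob:1ineq}, I would simply note that the system $f_1=\cdots=f_\ell=0$ together with $g\neq 0$ admits a solution in $k^M$ if and only if it is \emph{not} the case that $g$ vanishes on every $k^M$-solution of $F=0$; thus~\ref{prob:1ineq} is literally the complement of the decision problem shown undecidable in Theorem~\ref{thm:stong_undecidable}, and the complement of an undecidable problem is undecidable.

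For~\ref{prob:radical_ideal_membership}, one direction is immediate: if $g\in\sqrt{\langle \sigma^m(f_i)\mid m\in M,\ 1\leqslant i\leqslant\ell\rangle}$, then some power $g^e$ is a finite $k[\sigma^m(\bm{X})\mid m\in M]$-linear combination of the $\sigma^m(f_i)$, so evaluation at any solution $\bm{x}\in(k^M)^n$ of $F=0$ yields $g(\bm{x})^e=0$, and reducedness of $k^M$ (inherited from the field $k$) forces $g(\bm{x})=0$. The converse is the content of the strong Nullstellensatz (Theorem~\ref{thm:nullstellensatz_over_constants}) and requires the cardinality hypothesis $|k|>|M|$. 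The main obstacle is that this hypothesis is \emph{not} part of the statement of the corollary and, as Theorem~\ref{theo:counter strong_in_main} shows, cannot simply be dropped in full generality.

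To bypass this obstacle, I would exploit the fact that the answer to~\ref{prob:radical_ideal_membership} depends only on $k_0$ and not on the ambient field $k$. Indeed, for any field extension $k_0\subseteq K$, the ring $K[\sigma^m(\bm{X})\mid m\in M]$ is faithfully flat over $k_0[\sigma^m(\bm{X})\mid m\in M]$; writing $I$ for the ideal generated by the $\sigma^m(f_i)$ inside $k_0[\sigma^m(\bm{X})\mid m\in M]$, one therefore has $I\cdot K[\sigma^m(\bm{X})\mid m\in M]\cap k_0[\sigma^m(\bm{X})\mid m\in M]=I$, and hence $g\in\sqrt{I\cdot K[\sigma^m(\bm{X})\mid m\in M]}$ if and only if $g\in\sqrt{I}$. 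Given this, it suffices to prove~\ref{prob:radical_ideal_membership} undecidable for one convenient overfield of $k_0$. I would take $K$ to be an uncountable algebraically closed field of characteristic zero containing $k_0$ as a computable subfield---for example, the algebraic closure of $k_0$ adjoined with an uncountable algebraically independent transcendence basis. Then $|K|>\aleph_0=|M|$, so Theorem~\ref{thm:nullstellensatz_over_constants} gives the equivalence between radical membership and vanishing on $K^M$-solutions of $F=0$, and Theorem~\ref{thm:stong_undecidable} applied with ambient field $K$ shows that the latter is undecidable.
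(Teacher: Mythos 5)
Your proposal is correct and follows essentially the same route as the paper: \ref{prob:1ineq} by observing it is the complement of the problem in Theorem~\ref{thm:stong_undecidable}, and \ref{prob:radical_ideal_membership} by passing to an uncountable algebraically closed overfield and invoking Theorem~\ref{thm:nullstellensatz_over_constants}. Your explicit faithful-flatness justification that radical membership is insensitive to the ambient field is a detail the paper leaves implicit here (though it appears in the proof of Theorem~\ref{thm: alternative strong Nullstellensatz}), and it is a worthwhile addition.
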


\begin{proposition}\label{prop:2d}
  Let $k$ be a field of characteristic zero and $k_0 \subset k$ be a computable subfield, and let the monoid $M$ be either $\mathbb{N}^2$ or $\mathbb{Z}^2$. 
  Then the following problem is undecidable: given a finite set $F$ of $M$-polynomials over $k_0$, decide whether the system $F = 0$ has a solution in $k^M$.
\end{proposition}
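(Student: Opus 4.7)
The plan is to reduce from the (undecidable) Wang domino tiling problem. Recall that a Wang tile set is a finite collection $\tau_1, \ldots, \tau_N$ of unit squares with colored edges, and that it \emph{tiles} $M \in \{\mathbb{N}^2, \mathbb{Z}^2\}$ if there exists a map $\phi \colon M \to \{1, \ldots, N\}$ such that at every site $m \in M$ the east color of $\tau_{\phi(m)}$ matches the west color of $\tau_{\phi(m + (1,0))}$ and the north color of $\tau_{\phi(m)}$ matches the south color of $\tau_{\phi(m + (0,1))}$. The undecidability for $\mathbb{Z}^2$ is Berger's theorem; the $\mathbb{N}^2$ version follows by a standard K\"onig's lemma compactness argument (a tile set tiles $\mathbb{N}^2$ iff it tiles $n \times n$ squares for every $n$, which is equivalent to tiling $\mathbb{Z}^2$). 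The crucial point is that the matching rules are local and shift-invariant constraints on a two-dimensional grid, so they are naturally expressible as $M$-polynomial equations in the sequence ring $k^M$.

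Concretely, given a tile set $\tau_1, \ldots, \tau_N$ I would introduce $N$ variables $T_1, \ldots, T_N$ (to be thought of as indicator functions of the tile types) and build the following finite system $F$ of $M$-polynomials, all with coefficients in $\{-1, 0, 1\} \subseteq \mathbb{Q} \subseteq k_0$ (the last inclusion using $\operatorname{char} k = 0$):
\begin{itemize}
  \item $T_i^2 - T_i$ for each $i$, forcing $T_i$ to be pointwise $\{0,1\}$-valued (since $k$ is a field);
  \item $T_1 + \cdots + T_N - 1$, forcing exactly one $T_i$ to equal $1$ at each point;
  \item $T_i \cdot \sigma^{(1,0)}(T_j)$ for every pair $(i, j)$ with incompatible east/west colors;
  \item $T_i \cdot \sigma^{(0,1)}(T_j)$ for every pair $(i, j)$ with incompatible north/south colors.
\end{itemize}

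I would then check the equivalence ``$F = 0$ is consistent in $k^M$ if and only if the tile set tiles $M$''. Given a tiling $\phi$, the assignment $T_i(m) = 1$ when $\phi(m) = i$ and $T_i(m) = 0$ otherwise satisfies every equation by inspection. Conversely, any solution $(T_1, \ldots, T_N) \in (k^M)^N$ has each component pointwise in $\{0, 1\}$ (from the Boolean constraints) with exactly one $T_i(m)$ equal to $1$ at each $m$ (from the sum constraint), so the map $\phi(m) := $ unique $i$ with $T_i(m) = 1$ is well-defined, and the adjacency equations say precisely that $\phi$ satisfies the color-matching rules. Composing this effective reduction with the undecidability of the domino problem gives the proposition for both $M = \mathbb{N}^2$ and $M = \mathbb{Z}^2$.

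I do not anticipate a substantive obstacle inside the argument; every individual step is routine. The main conceptual content is in choosing the right source of undecidability: because solutions in $k^M$ must satisfy the defining equations at every shift (i.e.\ at every site of a two-dimensional grid), the problem is tailor-made to encode local combinatorial constraints on $M$, and Wang tilings are the canonical such constraint whose satisfiability is undecidable.
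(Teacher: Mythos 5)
Your proposal is correct and follows essentially the same strategy as the paper: an effective reduction from the undecidable domino (Wang tiling) problem, encoding the local matching rules as shift-invariant polynomial constraints in $k^M$. The encodings differ only cosmetically — you use $N$ Boolean indicator sequences $T_1,\ldots,T_N$ for the tile types (with $T_i^2-T_i$, the sum-to-one constraint, and products killing incompatible adjacent pairs), while the paper uses two sequences $X,Y$ recording the edge labels, constrained to lie in $\{1,\dots,N\}$ by $(X-1)\cdots(X-N)$ and matched to some domino by a product of sums of squares. Both correspondences between solutions and tilings are immediate, and your use of characteristic zero (so that $\mathbb{Q}\subseteq k_0$ and the sum constraint forces exactly one indicator to be $1$) is sound. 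The one genuine divergence is the treatment of $M=\mathbb{N}^2$: you invoke the combinatorial fact that a tile set tiles $\mathbb{N}^2$ iff it tiles $\mathbb{Z}^2$ (via K\"onig's lemma through finite squares), which is standard and correct, whereas the paper instead proves an algebraic transfer lemma — if the system has no solution in $k^{\mathbb{Z}^2}$ it passes to an uncountable algebraically closed extension, applies the Nullstellensatz to get $1$ in the $\mathbb{Z}^2$-invariant ideal, and shifts the finite certificate by $\sigma^{(H,H)}$ to land it in the $\mathbb{N}^2$-invariant ideal. Your compactness route is more elementary and avoids the Nullstellensatz machinery; the paper's route keeps the argument purely algebraic and illustrates how its Theorem~\ref{thm:nullstellensatz_over_constants} can be used to move between index monoids. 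Either is a complete proof.
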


\begin{proposition}\label{prop:free_monoid}
    Let $k$ be a field of characteristic zero and $k_0 \subset k$ be a computable subfield, and let $M_2$ be a free monoid with two generators.
    Then the following problem is undecidable: given a finite set $F$ of $M_2$-polynomials over $k_0$, decide whether $F = 0$ has a solution in $k^{M_2}$.
\end{proposition}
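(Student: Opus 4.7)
The plan is to reduce from a two-generator reachability problem for piecewise polynomial maps: given $\phi_a, \phi_b \colon k^n \to k^n$ defined over $k_0$, an initial point $x_0 \in k_0^n$, and a polynomial $G \in k_0[X_1, \ldots, X_n]$, decide whether there exists $w \in M_2$ with $G(\phi_w(x_0)) = 0$, where $\phi_w = \phi_{w_\ell} \circ \cdots \circ \phi_{w_1}$ for $w = w_1 \cdots w_\ell$. This is the two-generator analogue of the Skolem--Mahler--Lech-type problem behind Lemma~\ref{lemma: equivalence iteration and strong Nullstellensatz}, and I would establish its undecidability over any characteristic-zero field by having $\phi_a, \phi_b$ simulate the two non-deterministic choices of a Turing machine whose halting problem is undecidable, with $V(G)$ representing the halting configurations. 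This is exactly the point at which the non-commutativity of the two shifts in $M_2$ is essential, since a single-generator version collapses to the $\mathbb{N}$-case covered by the algorithm of \cite{difference_elimination}.

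Given such an instance, I would construct an $M_2$-system over $k_0$ in the variables $X = (X_1, \ldots, X_n)$ (state), auxiliary variables $Z$, an initialization flag $T$, and a Rabinowitsch variable $Y$, imposing $\sigma^a(X) = \tilde{\phi}_a(X, Z, T)$ and $\sigma^b(X) = \tilde{\phi}_b(X, Z, T)$, together with polynomial constraints that pin $Z$ down as the correct indicators of the piecewise conditions defining $\phi_a, \phi_b$ (Rabinowitsch-style encodings of ``$h(X) = 0$ versus $h(X) \neq 0$''), shift-invariant equations on $T$ that force $T_w = 1$ for every nonempty $w$ while implementing a ``reset'' on the first shift, so that $X_w$ equals $\phi_{w'}(x_0)$ for an appropriate suffix $w'$ of $w$ independently of the free initial value $X_1$, and finally $Y \cdot G(X) - 1 = 0$, forcing $G(X_w) \neq 0$ at every $w \in M_2$.

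A solution to this $M_2$-system in $k^{M_2}$ then corresponds exactly to an orbit of $x_0$ under $(\phi_a, \phi_b)$ that avoids $V(G)$ entirely, so the system is consistent if and only if the reachability problem has a negative answer; undecidability of the latter therefore yields undecidability of $M_2$-consistency, proving the proposition. The main obstacle is the combined encoding work: translating the piecewise polynomial branching of $\phi_a, \phi_b$ into polynomial $M_2$-equations via the auxiliary $Z$, and effectively anchoring the initial state to $x_0$ despite the shift-invariance of the $M_2$-polynomial formalism via the reset flag $T$, all while ruling out spurious solutions in which the flag fails to trigger the reset (so that $X_1 \neq x_0$) or in which a misbehaved branch of the piecewise encoding makes the system artificially consistent.
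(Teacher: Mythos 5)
There is a genuine gap, and it sits exactly where you locate ``the main obstacle'': anchoring the initial state $x_0$ at the root of $M_2$. Every $M_2$-polynomial constraint is imposed uniformly at all indices $w\in M_2$ (the solution set is stable under all the shifts $\sigma^w$), so no system of equations can distinguish the empty word from any other index. Concretely, your flag equations can force $T_{wa}=T_{wb}=1$ for all $w$, i.e.\ $T_w=1$ for all nonempty $w$, but they cannot force $T_e=0$: the assignment $T\equiv 1$ satisfies every equation your intended solution satisfies. With $T\equiv 1$ the reset never fires, and your system degenerates to ``$X_{wa}=\tilde\phi_a(X_w)$, $X_{wb}=\tilde\phi_b(X_w)$, $Y_wG(X_w)=1$'' with $X_e$ completely free. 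Hence the system is consistent as soon as \emph{some} point of $k^n$ has an orbit tree avoiding $V(G)$, regardless of whether the orbit of $x_0$ does; for a Turing-machine encoding such spurious non-halting ``garbage'' configurations are essentially unavoidable, so the forward direction of your claimed equivalence fails. (A secondary issue: your premise that non-commutativity is what makes the underlying dynamical problem undecidable is off. The one-generator problem ``does the orbit of a \emph{fixed} $x_0$ under a piecewise polynomial map avoid a hypersurface'' is already undecidable --- that is the content of Lemma~\ref{lemma: get Tn} plus the proof of Theorem~\ref{thm:stong_undecidable} via Hilbert's tenth problem. What is decidable in the $\mathbb{N}$-case is pure equational consistency, precisely because of the anchoring obstruction above; so you also do not need to develop a new two-generator reachability undecidability result.)

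The paper resolves the anchoring problem by using the second generator in a shift-\emph{invariant} way rather than trying to single out the root: it reduces from problem~\ref{prob:1ineq} (consistency of $f_1=\dots=f_\ell=0,\ g\neq 0$ in $k^{\mathbb{N}}$, undecidable by Theorem~\ref{thm:stong_undecidable}), lets $\sigma^a$ carry the one-generator dynamics ($\tilde f_i$ obtained from $f_i$ by $\sigma\mapsto\sigma^a$), and adds the single equation $Z\,\sigma^b(\tilde g)-1=0$. In the intended solution $\mathbf{y}_m=\mathbf{x}_{A(m)}$, where $A(m)$ counts trailing $a$'s, every index of the form $mb$ carries the value $\mathbf{x}_0$, so the equation at index $m$ uniformly asserts $g(\mathbf{x}_0)\neq 0$; the ``reset'' fires at every index, which is why it is expressible by equations. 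If you want to salvage your architecture, you must either route all dynamics through one generator and reserve the other as a universal reset (i.e.\ rediscover the paper's reduction), or find some other shift-invariant surrogate for the initial condition; a root-only flag cannot exist.
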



\section{Proofs}

Throughout this section, we will use the following notation.
For a tuple of sequences $(\{x_{1, i}\}_{i \in M}, \ldots, \{x_{n, i}\}_{i \in M})$, we will denote $\bm{x}_i = (x_{1, i}, \ldots, x_{n, i})$ for every $i \in M$, and the original tuple of sequences will be denoted by $\{\bm{x}_i\}_{i \in M}$.


\subsection{Proof of Theorem~\ref{thm:nullstellensatz_over_constants}}

In this section we establish two closely related versions of a strong difference Nullstellensatz (Theorem~\ref{thm:nullstellensatz general} and Theorem \ref{thm: alternative strong Nullstellensatz}). Theorem \ref{thm:nullstellensatz general} contains Theorem \ref{thm:nullstellensatz_over_constants} as a special case.

We begin by introducing the notation necessary to state our general result. Let $M$ be a monoid and let $k$ be an $M$-field. We note that for any field extension $K$ of $k$ the map $k\to K^M,\ a\mapsto (\sigma^m(a))_{m\in M}$ is a morphism of $M$-rings. Let $\mathbf{X} = (X_1, \ldots, X_n)$.
As in Section~\ref{subsec:universality}, for a subset $F$ of $k[\sigma^m(\mathbf{X})\mid m\in M]$, we set
\[
\mathcal{V}(F)=\{\bm{x}\in (k^M)^n \mid f(\bm{x})=0\ \forall \ f\in F\},
\]
and for a subset $S$ of $(k^M)^n$ we set
\[
\mathcal{I}(S)=\{f\in k[\sigma^m(\mathbf{X})\mid m\in M] \mid f(\bm{x})=0\ \forall \ \bm{x}\in S \}.
\]
\begin{theorem}[Strong Nullstellensatz]\label{thm:nullstellensatz general}
	Let $k$ be an algebraically closed $M$-field such that $|k| > |M|$. Then, for every subset $F$ of $k[\sigma^m(\mathbf{X})\mid m\in M]$ we have
	\[
	\mathcal{I}(\mathcal{V}(F))=\sqrt{\langle \sigma^m(F) \mid m \in M \rangle}.
	\]
\end{theorem}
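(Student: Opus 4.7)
The easy inclusion $\sqrt{\langle \sigma^m(F)\mid m\in M\rangle}\subseteq \mathcal{I}(\mathcal{V}(F))$ is immediate from the fact that $k^M$ is reduced (a product of fields): for any $\mathbf{x}\in\mathcal{V}(F)$, the evaluation map $h\mapsto h(\mathbf{x})$ is an $M$-ring homomorphism into $k^M$, so its kernel is a radical $M$-ideal containing $F$ and hence contains $\sqrt{\langle \sigma^m(F)\mid m\in M\rangle}$.

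For the reverse inclusion, set $J:=\langle \sigma^m(F)\mid m\in M\rangle$ and $A:=k[\sigma^m(\mathbf{X})\mid m\in M]/J$, viewed as an $M$-$k$-algebra. The task is to show that every $g\in k[\sigma^m(\mathbf{X})\mid m\in M]$ whose image $\bar g\in A$ is not nilpotent can be kept nonzero on some solution $\mathbf{x}\in\mathcal{V}(F)$. The crucial reduction is the following adjunction: $M$-equivariant $k$-algebra homomorphisms $\varphi\colon A\to k^M$ are in bijection with ordinary $k$-algebra homomorphisms $\psi\colon A\to k$, via $\varphi(a) = (\psi(\sigma^m a))_{m\in M}$ on one side and $\psi(a) := \varphi(a)_e$ on the other, where $e\in M$ is the identity. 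Consequently, to find the desired $\mathbf{x}$ it suffices to produce a maximal ideal $\mathfrak{m}$ of $A$ with residue field $k$ such that $\bar g\notin\mathfrak{m}$: taking $\psi\colon A\to A/\mathfrak{m}=k$ and the corresponding $\varphi$, the tuple $\mathbf{x}:=\varphi(\mathbf{X})\in (k^M)^n$ satisfies $f(\mathbf{x})=0$ for all $f\in F$ and $g(\mathbf{x})_e = \psi(\bar g)\neq 0$.

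The heart of the proof is thus a \emph{large-cardinality} Hilbert Nullstellensatz: if $k$ is algebraically closed and $B$ is a $k$-algebra generated by a set of cardinality strictly less than $|k|$, then every maximal ideal $\mathfrak{m}$ of $B$ satisfies $B/\mathfrak{m}=k$, and $B$ is a Jacobson ring. Applied to our $A$, whose generating set $\{\sigma^m(\bar X_i) : m\in M,\ 1\leq i\leq n\}$ has cardinality $\max(|M|,n)$, this gives $\sqrt{0}_A = \bigcap_{\mathfrak{m}\text{ maximal}}\mathfrak{m}$: when $M$ is infinite we have $|k|>|M|\geq\aleph_0$, so the generating set has cardinality $|M|<|k|$; when $M$ is finite, $A$ is finitely generated and the classical Hilbert Nullstellensatz applies directly. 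Hence the non-nilpotent $\bar g$ lies outside some such $\mathfrak{m}$, and the proof concludes.

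I expect the only nontrivial step to be the large-cardinality Nullstellensatz itself, which I would prove by the standard transcendence argument: if $B/\mathfrak{m}$ contained an element $t$ transcendental over $k$, then $\{(t-a)^{-1}\mid a\in k\}$ would be $k$-linearly independent in $k(t)\subseteq B/\mathfrak{m}$, forcing $\dim_k B\geq |k|$; but the number of monomials in a generating set $S$ of cardinality $<|k|$ (assuming $|k|$ infinite) is at most $\max(|S|,\aleph_0)<|k|$, a contradiction. The Jacobson property then follows by the usual localization trick: given a prime $\mathfrak{p}$ and $f\notin\mathfrak{p}$, the $k$-algebra $(B/\mathfrak{p})[f^{-1}]$ is still generated by fewer than $|k|$ elements, hence has a maximal ideal with residue field $k$, whose preimage is a maximal ideal of $B$ containing $\mathfrak{p}$ but not $f$.
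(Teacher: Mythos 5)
Your proof is correct and rests on the same key ingredient as the paper's: the Nullstellensatz for polynomial rings in fewer than $|k|$ variables over an algebraically closed field (Lang's theorem, which the paper simply cites and you re-prove via the standard transcendence/partial-fractions argument, together with the localization trick for the Jacobson property). Your adjunction between $M$-equivariant maps $A\to k^M$ and $k$-algebra homomorphisms $A\to k$ is exactly the paper's identification of $\mathcal{V}(I)\subseteq (k^M)^n$ with $\mathbb{V}(I)\subseteq k^{\mathbf{Y}}$ for $\mathbf{Y}=\{\sigma^m(X_i)\mid m\in M\}$, so the two arguments are essentially the same, yours being packaged through maximal ideals of the quotient algebra rather than set-theoretically.
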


In Section \ref{subsec:Counterexample} we present an example that shows that the assumption $|k|>|M|$ in Theorem \ref{thm:nullstellensatz general} cannot be omitted. However, we also have an alternative formulation of Theorem \ref{thm:nullstellensatz general} that works without any assumptions on the base difference field $k$. For a subset $F$ of $k[\sigma^m(\mathbf{X})\mid m\in M]$ we set 
\[
\mathfrak{I}(F)=\{f\in k[\sigma^m(\mathbf{X})\mid m\in M] \mid \ \text{for every field extension $K/k$, $f$ vanishes on all solutions of $F$ in $K^M$} \}
\]

\begin{theorem} \label{thm: alternative strong Nullstellensatz}
Let $k$ be an $M$-field and $F\subseteq k[\sigma^m(\mathbf{X})\mid m\in M]$. Then
\[
  \mathfrak{I}(F)=\sqrt{\langle \sigma^m(F) \mid m \in M\rangle}.
\]
\end{theorem}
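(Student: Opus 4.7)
The plan is to prove both inclusions, with the easy direction being $\supseteq$ and the real work in $\subseteq$.

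For $\supseteq$, suppose $f^N \in \langle \sigma^m(F) \mid m \in M\rangle$, and let $\mathbf{x} \in (K^M)^n$ be a solution of $F=0$ for some field extension $K/k$. Since the evaluation map $R := k[\sigma^m(\mathbf{X})\mid m\in M] \to K^M$ is an $M$-ring homomorphism, for every $g\in F$ and $\ell \in M$ we have $(\sigma^\ell g)(\mathbf{x}) = \sigma^\ell(g(\mathbf{x})) = 0$. Hence $f(\mathbf{x})^N = 0$; since $K$ is a field, $K^M$ is reduced, so $f(\mathbf{x}) = 0$ and $f \in \mathfrak{I}(F)$.

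For $\subseteq$ the plan is a contrapositive construction of a witness. Assume $f \notin \sqrt{\langle \sigma^m(F) \mid m\in M\rangle}$. Viewing $R$ simply as a polynomial ring (in possibly infinitely many indeterminates) over $k$, the radical of any ideal equals the intersection of the primes above it, so there exists a prime $P\subseteq R$ with $\langle \sigma^m(F)\mid m\in M\rangle\subseteq P$ and $f\notin P$. Because $k$ is a field and $P\neq R$, the composition $k\hookrightarrow R\to R/P$ is injective, so $K := \operatorname{Frac}(R/P)$ is a field extension of $k$.

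Now define $\mathbf{x} = (x_1,\ldots,x_n)\in (K^M)^n$ by $x_i(m) := \overline{\sigma^m(X_i)}$, the image in $K$ of $\sigma^m(X_i)\in R$. Let $\phi\colon R \to K^M$ be the unique $M$-ring morphism extending the canonical $M$-ring map $k\to K^M$, $a\mapsto (\sigma^m(a))_{m\in M}$, and sending $X_i\mapsto x_i$. The technical heart of the argument is the identity
\[
\phi(g)(m) = \overline{\sigma^m(g)} \quad \text{in } K, \quad \text{for every } g\in R \text{ and } m\in M.
\]
This identity I would verify on monomials $a\cdot \prod_{i,\ell}(\sigma^\ell X_i)^{\alpha_{i,\ell}}$ using the definitions: the coefficient $a\in k$ contributes $\sigma^m(a)$ at position $m$, and $\sigma^\ell(x_i)(m) = x_i(m\ell) = \overline{\sigma^{m\ell}(X_i)}$, which matches the corresponding term in $\sigma^m(g)$.

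Granted this identity, the conclusion is immediate. For every $g\in F$ and $m\in M$, $\sigma^m(g)\in P$ yields $\phi(g)(m)=0$, so $\phi(g)=0$ in $K^M$ and $\mathbf{x}$ solves $F=0$. On the other hand, taking $m$ equal to the identity of $M$ gives $\phi(f)(e) = \bar f \neq 0$ since $f\notin P$, so $f(\mathbf{x})\neq 0$ in $K^M$ and $f\notin \mathfrak{I}(F)$. The only real obstacle is the bookkeeping behind the displayed identity; once one tracks carefully how the $M$-action on $k$ and the shift action on $K^M$ interact through $\phi$, everything becomes formal and no cardinality assumption on $k$ is ever invoked.
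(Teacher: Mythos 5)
Your proof is correct, and it takes a genuinely different route from the paper's. The paper proves the hard inclusion by passing to an algebraically closed extension $K$ of $k$ with $|K|>|M|$, identifying $(K^M)^n$ with $K^{\mathbf{Y}}$ for $\mathbf{Y}=\{\sigma^m(\mathbf{X})\mid m\in M\}$, invoking Lang's strong Nullstellensatz for polynomial rings in infinitely many variables to get $f\in\sqrt{I\otimes_k K}$, and then descending to $k$ via $(I\otimes_k K)\cap k[\mathbf{Y}]=I$. You instead argue by contraposition with a generic-point construction: a prime $P\supseteq I=\langle\sigma^m(F)\mid m\in M\rangle$ with $f\notin P$ yields the field $K=\operatorname{Frac}(R/P)$ and the tautological solution $x_i(m)=\overline{\sigma^m(X_i)}$, and your key identity $\phi(g)(m)=\overline{\sigma^m(g)}$ does hold --- both sides are, for fixed $m$, ring homomorphisms $R\to K$ agreeing on $k$ and on the generators $\sigma^\ell(X_i)$ (using $\sigma^m(\sigma^\ell(X_i))=\sigma^{m\ell}(X_i)$ in $R$ versus $\sigma^\ell(x_i)(m)=x_i(m\ell)$ in $K^M$, which are compatible with the paper's conventions). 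Note that $P$ need not be $M$-invariant; your argument never requires this, since $\sigma^m$ is applied before reducing mod $P$. What your approach buys is that it is entirely elementary and self-contained: it needs only that a radical ideal is the intersection of the primes containing it, no cardinality bound, no Lang-type Nullstellensatz, and no tensor-product descent; it also makes transparent \emph{why} this formulation needs no hypothesis on $k$ --- the quantifier over all field extensions lets the (possibly huge, non-algebraically-closed) fraction field serve as the witness. What the paper's route buys is uniformity: it recycles verbatim the identification and Lemma~\ref{lemma: algebraic strong Nullstellensatz} already used for Theorem~\ref{thm:nullstellensatz general}, keeping the two Nullstellens\"atze proofs parallel.
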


For the proofs of Theorems \ref{thm:nullstellensatz general} and \ref{thm: alternative strong Nullstellensatz} we will need the following version of the strong algebraic Nullstellensatz for polynomials in infinitely many variables. 
Let $k$ be a field and $\bf{Y}$ a (not necessarily finite) set of indeterminates over $k$. For $F\subseteq k[\mathbf{Y}]$ we set 
\[
  \mathbb{V}(F) = \{\mathbf{y}\in k^{\mathbf{Y}} \mid f(\mathbf{y})=0\ \forall \ f\in F\},
\] 
and for $S\subseteq k^\mathbf{Y}$ we set 
\[
\mathbb{I}(S) = \{f\in k[\mathbf{Y}] \mid f(\mathbf{y})=0\ \forall \ \mathbf{y}\in S\}.
\]

\begin{lemma} \label{lemma: algebraic strong Nullstellensatz}
Let $k$ be an algebraically closed field and $F\subseteq k[\mathbf{Y}]$. If $|k|>|\mathbf{Y}|$, then $\mathbb{I}(\mathbb{V}(F))=\sqrt{\langle F\rangle}$.
\end{lemma}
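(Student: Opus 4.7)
The plan is to combine the Rabinowitsch trick with a cardinality/transcendence argument. The inclusion $\sqrt{\langle F\rangle} \subseteq \mathbb{I}(\mathbb{V}(F))$ is immediate from the definitions, since any element of $\langle F\rangle$ vanishes on $\mathbb{V}(F)$, and the same then holds for any power-root of such an element. For the reverse inclusion, I would proceed as follows.

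Suppose $g \in k[\mathbf{Y}]$ satisfies $g \notin \sqrt{\langle F\rangle}$. Introduce a fresh indeterminate $Z$ and form the ideal $\mathfrak{a} := \langle F, 1 - Zg\rangle \subseteq k[\mathbf{Y}, Z]$. The usual Rabinowitsch specialization $Z \mapsto 1/g$ in $k(\mathbf{Y})$, followed by clearing denominators by a sufficiently large power of $g$, shows that $1 \in \mathfrak{a}$ would force some $g^N \in \langle F\rangle$, contradicting the hypothesis. So $\mathfrak{a}$ is proper; pick any maximal ideal $\mathfrak{m} \supseteq \mathfrak{a}$ and set $L := k[\mathbf{Y}, Z]/\mathfrak{m}$, which is a field extension of $k$. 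It suffices to show $L = k$, for then the composition $k[\mathbf{Y}, Z] \twoheadrightarrow L = k$ evaluates the generators to a point $(\mathbf{y}, z) \in k^{\mathbf{Y}} \times k$ with $f(\mathbf{y}) = 0$ for every $f \in F$ and $g(\mathbf{y}) z = 1$; in particular $g(\mathbf{y}) \neq 0$, witnessing $g \notin \mathbb{I}(\mathbb{V}(F))$.

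The heart of the argument is thus the identification $L = k$. If $|\mathbf{Y}|$ is finite, the classical Zariski lemma applied to the finitely generated $k$-algebra $L$ gives $L/k$ algebraic at once. If $|\mathbf{Y}|$ is infinite, then $|\mathbf{Y}| \geq \aleph_0$ combined with the hypothesis $|k| > |\mathbf{Y}|$ yields $|k| > \aleph_0$, and a monomial count gives $\dim_k L \leq \dim_k k[\mathbf{Y}, Z] = |\mathbf{Y}| < |k|$. I would then invoke the standard cardinality-flavoured Zariski lemma: were some $t \in L$ transcendental over $k$, the family $\{(t - \alpha)^{-1}\}_{\alpha \in k} \subseteq L$ would be $k$-linearly independent (a nontrivial $k$-linear relation, cleared of denominators, would produce a nonzero polynomial in $k[T]$ vanishing at $t$, contradicting transcendence), hence $\dim_k L \geq |k|$, contradiction. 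So $L/k$ is algebraic, and $L = k$ because $k$ is algebraically closed.

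The main obstacle beyond the mechanical Rabinowitsch step is this cardinality/transcendence argument; everything rests on the monomial-count estimate and on the strict inequality $|k| > |\mathbf{Y}|$ (not merely $\geq$), which is exactly what is needed to rule out a transcendental residue field. Once $L = k$ is secured, evaluation at the image of $(\mathbf{Y}, Z)$ produces the desired point, completing the reverse inclusion.
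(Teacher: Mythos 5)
Your proof is correct, but it takes a different route from the paper: the paper disposes of this lemma in one line by citing the main theorem of Lang's paper on the Nullstellensatz in infinite-dimensional space, whereas you reprove the result from scratch. Your argument --- Rabinowitsch's trick to reduce to the weak form, followed by the cardinality version of Zariski's lemma (a transcendental element $t$ in the residue field $L$ would make $\{(t-\alpha)^{-1}\}_{\alpha\in k}$ a $k$-linearly independent family of size $|k|$, exceeding the monomial-count bound $\dim_k L\leq|\mathbf{Y}|<|k|$) --- is essentially the standard proof of Lang's theorem, and every step survives the passage to infinitely many variables because any single ideal-membership certificate involves only finitely many polynomials. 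The one point worth making explicit is the degenerate case $g=0$, where the Rabinowitsch substitution $Z\mapsto 1/g$ is unavailable; but then $g\in\sqrt{\langle F\rangle}$ trivially, so nothing is lost. What your approach buys is a self-contained, citation-free proof that makes visible exactly where the strict inequality $|k|>|\mathbf{Y}|$ is used; what the paper's approach buys is brevity, since the heavy lifting is outsourced to the reference.
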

\begin{proof}
This follows from the main theorem of \cite{Lang:Nullstellensatz}.
\end{proof}

\begin{proof}[Proof of Theorem \ref{thm:nullstellensatz general}]
	As $\mathcal{I}(S)$ is a radical $M$-invariant ideal, for any subset $S$ of $k^M$, we have 
	\[
	\sqrt{\langle \sigma^m(F) \mid m \in M\rangle }\subseteq \mathcal{I}(\mathcal{V}(F)).
	\]
	
	To establish the reverse inclusion we set $\mathbf{Y}=\{\sigma^m(\mathbf{X}) \mid m\in M\}$, so that $(k^M)^n$ can be identified with $k^{\mathbf{Y}}$. The nature of the map $k\to k^M,\ a\mapsto (\sigma^m(a))_{m\in M}$ is such that for $f\in k[\sigma^m(\mathbf{X})\mid m\in M]$ and $\mathbf{x}\in (k^M)^n$ we have $f(\bm{x})=0\in (k^M)^n$ if and only if $\sigma^m(f)(\bm{x})=0\in k$ for all $m\in M$. 
	So, under the identification $(k^M)^n=k^{\bm{Y}}$, we have $\mathcal{V}(I)=\mathbb{V}(I)$ for any $M$-invariant ideal $I$ of $k[\sigma^m(\mathbf{X})\mid m\in M] = k[\bm{Y}]$.
	Similarly, for any subset $S$ of $(k^M)^n=k^{\bf{Y}}$ we have $f\in\mathcal{I}(S)\subseteq k[\sigma^m(\mathbf{X})\mid m\in M]$ if and only if $\sigma^m(f)\in \mathbb{I}(S)\subseteq k[\bf{Y}]$ for all $m\in M$, in particular, $\mathcal{I}(S)\subseteq \mathbb{I}(S)$. 
	Clearly $\mathcal{V}(F)=\mathcal{V}(I)$, where $I = \langle \sigma^m(F) \mid m \in M\rangle$, and so	
	\[
	\mathcal{I}(\mathcal{V}(F))=\mathcal{I}(\mathcal{V}(I))=\mathcal{I}(\mathbb{V}(I))\subseteq\mathbb{I}(\mathbb{V}(I)=\sqrt{I}.
	\]
	In the case that $M$ is infinite the last equality here follows from Lemma \ref{lemma: algebraic strong Nullstellensatz} since then $|X|=n|M|=|M|<|k|$. In the case that $M$ is finite, the last equality reduces to the usual algebraic strong Nullstellensatz.
\end{proof}

\begin{proof}[Proof of Theorem \ref{thm: alternative strong Nullstellensatz}]
Again, the inclusion $\sqrt{I}\subseteq\mathfrak{I}(F)$, where $I = \langle \sigma^m(F) \mid m \in M \rangle$, is clear. 
To establish the reverse inclusion we let $K$ denote an algebraically closed field extension of $k$ with $|K|>|M|$ and we proceed similarly to the proof of Theorem \ref{thm:nullstellensatz general}: For $\bm{Y}=\{\sigma^m(\mathbf{X}) \mid m\in M\}$ we have, under the identification $(K^M)^n=K^{\bm{Y}}$, that 
\[
  \{\bm{x}\in (K^M)^n \mid f(\bm{x})=0 \ \forall \ f\in F\} = \{\bm{x}\in K^{\bm{Y}} \mid \sigma^m(f)(\bm{x})=0 \ \forall \ f\in F, \ m\in M\}.
\]
Thus, if $f\in\mathfrak{I}(F)\subset k[\sigma^m(\mathbf{X})\mid m\in M] = k[\bm{Y}]$, then $f\in \mathbb{I}(\mathbb{V}(I))$. Note that here $I \subseteq k[\sigma^m(\mathbf{X})\mid m\in M] \subseteq K[\bm{Y}]$ but $\mathbb{I}$ and $\mathbb{V}$ are applied with respect to $K$. So it follows from Lemma \ref{lemma: algebraic strong Nullstellensatz} that $f\in \sqrt{\langle I \rangle}$, where $\langle I \rangle \subseteq K[X]$.
But $K[X] = k[X]\otimes_k K$ and $\langle I \rangle = I \otimes_k K$. Therefore, if $e\geq 1$ is such that $f^e\in  \langle I \rangle = I \otimes_k K$, then $f^e\in (I\otimes_k K)\cap k[X]= I$. 
Thus $f\in \sqrt{I}$ as desired.
\end{proof}


\subsection{Proof of Theorem~\ref{thm:real}}

Let $M$ be $\mathbb{N}$ or $\mathbb{Z}$.
For every polynomial equation $P(t_1, \ldots, t_n) = 0$ with coefficients in $\mathbb{Z}$, we will construct a system of difference equations $F_P = 0$ over $\mathbb{Q}$ such that $P = 0$ has a solution in $\mathbb{Z}^n$ if and only if $F_P = 0$ has a solution in $k^{M}$.
Then the theorem will follow from the undecidability of diophantine equations~\cite{M70}.
   
   \begin{lemma}\label{lem:inf_zeros}
     Let $\bm{Y} = (Y_1, \ldots, Y_6)$.
     There exists a finite set $G \subset \mathbb{Q}[\sigma^i(X), \sigma^i(\bm{Y}) \mid i \in M ]$ such that, for every solution of $G = 0$ in $k^{M}$, the sequence $(x_i)_{i \in M}$ corresponding to $X$ has the property that $(x_i)_{i \in \mathbb{N}}$ contains infinitely many zeroes.
     
     Moreover, for every sequence $(x_i)_{i\in M} \in k^{M}$ such that $(x_i)_{i \in \mathbb{N}}$ contains infinitely many zeroes, there exists a solution of $G = 0$ in $k^M$ such that $(x_i)_{i\in M}$ is the $X$-coordinate of the solution.
   \end{lemma}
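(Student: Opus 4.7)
The plan is to encode the ``distance from index $i$ to the next zero of $X$'' by an auxiliary non-negative integer-valued sequence $Y_1$, to use Lagrange's four-square theorem (which is why four of the six auxiliary variables, $Y_2, Y_3, Y_4, Y_5$, are spent) to enforce non-negativity, and to use a single multiplicative witness $Y_6$ to tie the vanishing of $Y_1$ to that of $X$. With $\bm{Y} = (Y_1, \ldots, Y_6)$ I take $G$ to consist of the three $M$-polynomials
\[
  g_1 := Y_1 - (Y_2^2 + Y_3^2 + Y_4^2 + Y_5^2), \qquad g_2 := Y_1(\sigma(Y_1) - Y_1 + 1), \qquad g_3 := X - Y_1 Y_6,
\]
all of which have integer coefficients, so $G \subset \mathbb{Q}[\sigma^i(X), \sigma^i(\bm{Y}) \mid i \in M]$.

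For the forward implication I would suppose that $(x, y_1, \ldots, y_6) \in (k^M)^7$ solves $G = 0$ while $(x_i)_{i \in \mathbb{N}}$ has only finitely many zeros, pick $N \in \mathbb{N}$ beyond the last such zero, and argue by contradiction. Since $k \subseteq \mathbb{R}$, equation $g_1$ yields $y_{1,i} \geq 0$ for every $i \in M$, while $g_3$ forces $y_{1,i} \ne 0$ whenever $x_i \ne 0$; hence $y_{1,i} > 0$ for all $i \geq N$. Then $g_2$ gives the recurrence $y_{1,i+1} = y_{1,i} - 1$ at each such $i$, so inductively $y_{1,N+j} = y_{1,N} - j$ as long as the previous values remain positive. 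For $j = \lfloor y_{1,N} \rfloor$ the value $y_{1,N} - j$ lies in $[0, 1)$: if it equals $0$ then $g_3$ forces $x_{N+j} = 0$, contradicting the choice of $N$; if it lies in $(0,1)$ then one more application of $g_2$ yields a value in $(-1, 0)$, contradicting $g_1$.

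For the backward implication, given $(x_i)_{i \in M}$ with infinitely many zeros in $(x_i)_{i \in \mathbb{N}}$, define $y_{1,i} := \min\{\, j - i : j \geq i,\ j \in \mathbb{N},\ x_j = 0\,\}$ for $i \in \mathbb{N}$ and, when $M = \mathbb{Z}$, extend by $y_{1,i} := y_{1,0} + |i|$ for $i < 0$. This is a non-negative integer-valued sequence satisfying $g_2 = 0$ by construction; setting $y_{6,i} := x_i / y_{1,i}$ where $y_{1,i} \ne 0$ and $y_{6,i} := 0$ otherwise gives $g_3 = 0$. Applying Lagrange's four-square theorem pointwise produces integers $y_{2,i}, y_{3,i}, y_{4,i}, y_{5,i} \in \mathbb{Z} \subseteq k$ with $y_{1,i} = \sum_{s=2}^{5} y_{s,i}^2$, realizing $g_1 = 0$.

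The main external ingredient is Lagrange's four-square theorem, which is what lets the construction produce solutions uniformly over any subfield $k \subseteq \mathbb{R}$, including $k = \mathbb{Q}$ where real square roots are unavailable. The subtlest point in the forward argument is that $y_{1,N}$ is a priori an arbitrary non-negative real rather than an integer; the interplay of the ``decrease by $1$'' relation from $g_2$ with non-negativity from $g_1$ is precisely what drives the contradiction in either case, and checking this case split carefully is the one place where the argument is not purely formal.
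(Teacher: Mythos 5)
Your proof is correct and follows essentially the same strategy as the paper's: a counter sequence forced to be non-negative via Lagrange's four-square theorem (using that $k\subseteq\mathbb{R}$), decrementing by $1$ and able to bottom out only at zeros of $X$, so that finitely many zeros of $(x_i)_{i\in\mathbb{N}}$ would drive the counter negative. The only difference is cosmetic: the paper ties the counter to $X$ via $XY_1=0$ plus a refill term $\sigma(Y_2)-Y_2+1-Y_1$, whereas you tie it via $X=Y_1Y_6$ and let the counter itself vanish, but both encodings implement the same idea.
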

   
   \begin{proof}
      We define $G$ as
      \[
      G := \{ XY_1, Y_2 - Y_3^2 - Y_4^2 - Y_5^2 - Y_6^2, \sigma(Y_2) - Y_2 + 1 - Y_1\}.
      \]
      Consider a solution 
      \[
      \bigl( (x_i)_{i \in M}, (y_{1, i})_{i \in M}, \ldots, (y_{6, i})_{i \in M} \bigr) \text{ of } G = 0 \text{ in } k^{M}.
      \]
      If $(x_i)_{i \in \mathbb{N}}$ contains only finitely many zeroes, then $(y_{1, i})_{i \in \mathbb{N}}$ contains only finitely many nonzero elements.
      In other words, there exists $N \in \mathbb{N}$ such that $y_{1, i} = 0$ for every $i > N$.
      Thus, $y_{2, i + 1} = y_{2, i} - 1$ for every $i > N$, so there exists $i_0$ such that $y_{2, i_0} < 0$.
      This contradicts the fact that $y_{2, i_0} = y_{3, i_0}^2 + y_{4, i_0}^2 + y_{5, i_0}^2 + y_{6, i_0}^2 \geqslant 0$.
      
      To prove the second claim of the lemma, consider a sequence $(x_i)_{i \in M}$ such that $(x_i)_{i \in \mathbb{N}}$ contains infinitely many zeroes.
      We will construct a corresponding solution of $G = 0$ in $k^{M}$.
      Consider positive integers $i_1 < i_2 < i_3 < \ldots$ such that $x_{i_n} = 0$ for every $n > 0$.
      Then we set
      \[
      y_{1, j} = 
      \begin{cases}
         i_{m + 1} - i_m, \text{ if } j = i_m \text{ for some } m,\\
         0, \text{ otherwise}
      \end{cases} 
      \quad\text{ and }\quad
      y_{2, j} = 
      \begin{cases}
         i_{m + 1} - j, \text{ if } i_m < j \leqslant i_{m + 1} \text{ for some } m,\\
         i_1 - j, \text{ otherwise}.
      \end{cases}
      \]
      The choice of $i_1, i_2, \ldots$ implies that $x_j y_{1, j} = 0$ for all $j \in M$.
      A direct computation shows that $y_{2, j + 1} = y_{2, j} - 1 + y_{1, j}$ for all $j \in M$.
      Finally, the existence of $y_{3, j}, y_{4, j}, y_{5, j}, y_{6, j}$ satisfying $y_{2, j} = y_{3, j}^2 + y_{4, j}^2 + y_{5, j}^2 + y_{6, j}^2$ follows from the fact that $y_{2, j}$ is a nonnegative integer and Lagrange's four-square theorem~\cite[Theorem 369]{NT}.
   \end{proof}
   
   We return to the proof of Theorem~\ref{thm:real}.
   We apply Lemma~\ref{lem:inf_zeros} $n + 1$ times, and obtain $n + 1$ systems $G_0 = 0, \ldots, G_{n} = 0$ with distinguished unknowns $X_0, \ldots, X_{n}$.
   We set
   \[
   F_P := G_0 \cup \ldots \cup G_{n} \cup \{ X_0 - P(X_1, \ldots, X_n), (\sigma(X_1) - X_1)^2-1, \ldots, (\sigma(X_n) - X_n)^2-1\}.
   \]
   We will show that $F_P = 0$ has a solution in $k^{M}$ if and only if $P(t_1, \ldots, t_n) = 0$ has a solution in $\mathbb{Z}$.

   \paragraph{Solution of $F_P = 0$ $\implies$ solution of $P = 0$.}
   Consider a solution of $F_P$ in $k^{M}$.
   For every $0 \leqslant m \leqslant n$, we denote the $X_m$-coordinate of the solution by $(x_{m, i})_{i \in M}$.
   For every $1 \leqslant m \leqslant n$, the sequence $(x_{m, i})_{i \in M}$ contains infinitely many zeroes due to Lemma~\ref{lem:inf_zeros}, every two consecutive numbers in the sequence differ by one, thus all the numbers in the sequence are integers.
   Since $(x_{0, i})_{i \in \mathbb{N}}$ contains infinitely many zeroes, the diophantine equation $P(t_1, \ldots, t_n) = 0$ has an integer solution.
   
   \paragraph{Solution of $P = 0$ $\implies$ solution of $F_P = 0$.}
   Consider a solution $(a_1, \ldots, a_m)$ of $P(t_1, \ldots, t_m) = 0$ in $\mathbb{Z}^n$.
   Consider sequences $(x_{1, i})_{i \in M}, \ldots, (x_{n, i})_{i \in M}$ such that
   \begin{itemize}
       \item every two consecutive numbers in the sequences differ by one;
       \item for every $1 \leqslant m \leqslant n$, $(x_{m, i})_{i = 0}^\infty$ contains infinitely many zeros;
       \item $x_{1, i} = a_1, \ldots, x_{n, i} = a_n$ for infinitely many $i$.
   \end{itemize}
   We define $x_{0, i}$ as $P(x_{1, i}, \ldots, x_{n, i})$ for every $i \in M$ and observe that $(x_{0, i})_{i \in \mathbb{N}}$ contains infinitely many zeroes.
   The defined sequences satisfy equations 
   \[
   X_0 - P(X_1, \ldots, X_n) = (\sigma(X_1) - X_1)^2-1 =  \ldots = (\sigma(X_n) - X_n)^2-1 = 0.
   \]
   The second part of Lemma~\ref{lem:inf_zeros} implies that, for every $0 \leqslant m \leqslant n$, the sequence $(x_{m, i})_{i \in M}$ can be extended to a solution of $G_m = 0$.
   Thus, we obtain a solution of $F_P = 0$.


\subsection{Proofs of Theorem~\ref{thm:stong_undecidable} and Corollary~\ref{cor:strong_undecidability}}

We will first establish a lemma that draws a connection between the strong difference Nullstellensatz and iterations of piecewise polynomial maps. This lemma is crucial for the proof of Theorem \ref{thm:stong_undecidable} and for establishing the counterexample in Theorem \ref{theo:counter strong_in_main}.

Let $k$ be a field. For a subset $F$ of $k[\mathbf{X}]=k[X_1,\ldots,X_n]$ we denote the closed subset of $\mathbb{A}^n_k$ defined by $F$ with $V(F)$. Recall that a subset $V$ of $\mathbb{A}^n_k$ is locally closed if it is of the form $V(F)\smallsetminus V(F')$ for subsets $F$ and $F'$ of $k[\mathbf{X}]$. A regular function $f\colon V\to\mathbb{A}^1_k$ on $V$ is a \emph{polynomial function} if it is the restriction of a regular function $\mathbb{A}^n_k\to \mathbb{A}^1_k$, i.e., if it is given by a polynomial in $k[\mathbf{X}]$.

\begin{definition}
  A \emph{piecewise polynomial function} $\mathbb{A}^n_k \to \mathbb{A}^1_k$ is a partition of $\mathbb{A}^n_k$ into locally closed subsets $C_1,\ldots,C_m$, together with a polynomial function $f_i$ on every $C_i$.
  
  A \emph{piecewise polynomial map} $\mathbf{p} \colon \mathbb{A}^n_k\to\mathbb{A}^n_k$ is an $n$-tuple $(p_1,\ldots,p_n)$ of piecewise polynomial functions.
\end{definition}

Note that a piecewise polynomial map $\mathbf{p} \colon \mathbb{A}^n_k\to\mathbb{A}^n_k$ defines an actual map $\mathbb{A}^n_k(K)\to  \mathbb{A}^n_k(K)$ for every field extension $K$ of $k$.


\begin{lemma} \label{lemma: equivalence iteration and strong Nullstellensatz}
  Let $M$ be $\mathbb{N}$ or $\mathbb{Z}$.
  Let $\mathbf{p}\colon \mathbb{A}_k^n\to\mathbb{A}_k^n$ be a piecewise polynomial map and let $V$ be a closed subset of $\mathbb{A}_k^n$. Then there exist (and can be computed algorithmically) an integer $r\geq 1$ and difference polynomials $f_1, \ldots, f_\ell, g\in k[\sigma^i(T_1),\ldots,\sigma^i(T_r)|\ i\in\mathbb{N}]$ such that for every field extension $K$ of $k$ the following two statements are equivalent:
  \begin{itemize}
      \item There exists a sequence $(\mathbf{x}_i)_{i\in\mathbb{N}}=(x_{1, i}, \ldots, x_{n, i})_{i \in \mathbb{N}}\in (K^\mathbb{N})^n$ such that
      \[
        \mathbf{x}_0 \in V(K) , \quad \mathbf{x}_{i + 1} = \mathbf{p}(\mathbf{x}_i) \text{ for every } i \in \mathbb{N},
      \]
     and $x_{n,i}
     \neq 0$ for $i\geq 1$.
      
      \item There exists a solution of $f_1 = \ldots = f_\ell = 0$ in $(K^M)^r$ such that $g$ does not vanish on this solution.
  \end{itemize}
\end{lemma}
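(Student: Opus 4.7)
The plan is to express the orbit of a piecewise polynomial map as a solution of a difference-polynomial system, using a Boolean ``activation flag'' variable that lets us encode the condition $\mathbf{x}_0\in V$ despite the fact that autonomous difference equations apply uniformly across the index set. First, I would refine the given partition of $\mathbb{A}^n_k$ so that each piece has the form $C_j=V(F_j)\setminus V(h_j)$ for a single polynomial $h_j$. This is possible algorithmically via $V(F)\setminus V(F')=\bigcup_{f\in F'}(V(F)\setminus V(f))$ together with a lexicographic cut to restore disjointness. On each refined piece the map is given by an $n$-tuple of polynomials $\mathbf{p}_j=(p_{j,1},\ldots,p_{j,n})$, and $V$ is cut out by a finite set $H\subset k[X_1,\ldots,X_n]$.

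The difference unknowns would be $\mathbf{T}=(T_1,\ldots,T_n)$ (the orbit point), indicators $S_1,\ldots,S_{m'}$ (one per refined piece), auxiliaries $Y_1,\ldots,Y_{m'}$ (witnessing $h_j\neq 0$), a monotone Boolean activation flag $R$, and one further auxiliary $Z$ (witnessing $T_n\neq 0$), so $r=n+2m'+2$. The defining system $f_1=\cdots=f_\ell=0$ would consist of $S_j^2-S_j$ and $R^2-R$; the monotonicity relation $R(1-\sigma(R))$; the ``in piece $j$ at step $i$'' conditions all multiplied by $\sigma(R)$, namely $\sigma(R)(\sum_j S_j-1)$, $\sigma(R)\,S_j\,f$ for $f\in F_j$, and $\sigma(R)\,S_j\,(h_j Y_j-1)$; the conditional orbit step $\sigma(R)(\sigma(T_k)-\sum_j S_j\,p_{j,k}(\mathbf{T}))$ for $k=1,\ldots,n$; the activation-point membership equations $(1-R)\sigma(R)\cdot h(\mathbf{T})$ for each $h\in H$; and the non-vanishing witness $R(T_n Z-1)$. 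I would take $g=(1-R)\sigma(R)$.

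For the forward implication, a solution with $g\not\equiv 0$ has, by monotonicity of $R$, a unique index $i_0$ where $R_{i_0}=0$ and $R_{i_0+1}=1$; the conditional equations then force $\mathbf{y}_{i_0}\in V(K)$, select at each $i\geq i_0$ the unique piece $j$ containing $\mathbf{y}_i$ (using disjointness of the refined pieces together with invertibility of $h_j$ witnessed by $Y_j$), yield the orbit law $\mathbf{y}_{i+1}=\mathbf{p}(\mathbf{y}_i)$ for $i\geq i_0$, and guarantee $y_{n,i}\neq 0$ for $i\geq i_0+1$; the shifted sequence $\mathbf{x}_k:=\mathbf{y}_{i_0+k}$ then fulfills statement~(1). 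For the converse, given an orbit $(\mathbf{x}_i)_{i\in\mathbb{N}}$ satisfying~(1), I would set $\mathbf{y}_i=\mathbf{x}_i$ on $\mathbb{N}$ (with arbitrary padding for negative indices when $M=\mathbb{Z}$), $R_i=0$ for $i\leq 0$ and $R_i=1$ for $i\geq 1$, and read off the remaining variables $S_j$, $Y_j$, $Z$ from the orbit; then $g$ takes the value $1$ at position $0$.

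The main obstacle is conceptual rather than computational: because difference equations apply uniformly at every index, there is no direct way to impose a constraint on $\mathbf{y}_0$. The activation flag $R$ sidesteps this by localizing the ``initial'' event to the unique transition of $R$, conditioning the $V$-membership on that transition, and using $g=(1-R)\sigma(R)$ to witness that the transition actually occurs. For the $M=\mathbb{Z}$ case it is essential that the dynamical equations be multiplied by $\sigma(R)$ rather than $R$, so that the sequence is entirely unconstrained at indices below the transition and arbitrary padding on the left is permissible; the monotonicity argument still pins down a unique transition whenever $g\not\equiv 0$, which is what closes the equivalence.
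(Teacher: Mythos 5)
Your proposal is correct and follows essentially the same strategy as the paper's proof: a monotone $0/1$ activation sequence whose unique transition localizes the initial condition $\mathbf{x}_0\in V$, Rabinowitsch-style auxiliaries to witness non-vanishing of $x_{n,i}$, indicator variables selecting the piece of the partition, and dynamics equations multiplied by the shifted flag. The only real difference is in how the piece indicators are obtained --- you refine to pieces $V(F_j)\setminus V(h_j)$ and use guessed Boolean selectors $S_j$ validated by disjointness and a partition-of-unity equation, whereas the paper forces the indicator of each closed set $W=V(h_1,\ldots,h_t)$ via the system $Zh_i=0$, $Z+\sum Y_ih_i=1$ and takes differences $Z_j-Z_j'$ for the locally closed pieces --- but both encodings work.
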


Before showing the construction of the systems of difference equations in full generality, we will illustrate it on two examples.

\begin{example}\label{ex:piecewise_simple}
We will use the notation of Lemma~\ref{lemma: equivalence iteration and strong Nullstellensatz}. Let 
\[
  k = \mathbb{C}, \quad n = 1,\quad p(x) = x + 1, \quad V = \{ 0 \}.
\]
We introduce two difference variables $X$ and $U'$, and consider difference polynomials
\[
  \widetilde{f}_1 := \sigma(X) - p(X) = \sigma(X) - (X + 1), \quad \widetilde{f}_2 := XU' - 1.
\]
Every sequence $(x_i)_{i \in \mathbb{N}}$ satisfying $\widetilde{f}_1 = 0$ obeys the recurrence $x_{i + 1} = p(x_i)$.
Furthermore, such a sequence can be extended to a solution of $\widetilde{f}_1 = \widetilde{f}_2 = 0$ if and only if $x_i \neq 0$ for every $i \geqslant 0$ (compare with $i \geqslant 1$ in the statement of the lemma).

Now we would like to force $(x_i)_{i \in \mathbb{N}}$ to have at least one term in $V$.
For doing this, we will introduce one more difference variable $U$ and difference polynomials
\[
  f_3 := U (U - 1), \quad f_4 := (\sigma(U) - U) (\sigma(U) - U - 1), \quad g := \sigma(U) - U.
\]
Consider a sequence $(u_i)_{i \in \mathbb{N}}$ which is a solution of $f_3 = f_4 = 0, \; g \neq 0$.
The equations $f_3 = f_4 = 0$ imply that $(u_i)_{i \in \mathbb{N}}$ is a ``step sequence'' in the sense that it takes only values zero and one and each next value is the same or greater by one.
There are three types of sequences that satisfy these conditions
\[
   (0, \ldots, 0, 1, 1, \ldots), \quad (0, 0, 0, 0, \ldots), \quad (1, 1, 1, 1, \ldots).
\]
The two last are ruled out by the extra condition $g \neq 0$.

We introduce new polynomials
\[
  f_5 := (\sigma(U) - U)X, \quad f_1 := \sigma(U) \widetilde{f}_1 = \sigma(U) (\sigma(X) - (X + 1)), \quad f_2 = U\widetilde{f}_2 = U(XU' - 1).
\]
Consider a triple of sequences $(x_i, u_i, u_i')_{i \in \mathbb{N}}$ satisfying 
\begin{equation}\label{eq:ex_system}
f_1 = f_2 = f_3 = f_4 = f_5 = 0, \quad g \neq 0.
\end{equation}
As we have shown, there will be $i_0 \in \mathbb{N}$ such that 
\[
  u_0 = \ldots = u_{i_0} = 0 \quad \text{ and } 1 = u_{i_0 + 1} = u_{i_0 + 2} = \ldots.
\]
Equation $f_5 = 0$ ensures that $x_{i_0} \in V$.
The fact that we have multiplied $\widetilde{f}_1$ and $\widetilde{f}_2$ by $\sigma(U)$ and $U$, respectively, implies that $\widetilde{f}_1$ and $\widetilde{f}_2$ have to vanish on the indices $i \geqslant i_0$ and $i > i_0$, respectively.

To summarize, we see that the sequence $(y_i)_{i \in \mathbb{N}} = (x_{i_0 + i})_{i \in \mathbb{N}}$ satisfies $y_{i + 1} = p(y_i)$, $y_0 \in V$, and $y_i \neq 0$ for $i \geqslant 1$.
On the other hand, any such sequence can be completed by $u = (0, 1, 1, \ldots)$ and $u' = (0, 1 / y_1, 1 / y_2, \ldots)$ to a solution of~\eqref{eq:ex_system}.
\end{example}

\begin{example}
Now we consider a version of Example~\ref{ex:piecewise_simple} where $p$ is actually a piecewise polynomial function, not just polynomial.
Let
\[
  k = \mathbb{C}, \quad n = 1,\quad p(x) = \begin{cases}
  x + 1, \text{ if } x\neq 2\\
  1, \text{ if } x = 2,
  \end{cases}
  \quad V = \{ 0 \}.
\]
We define $C_1 := \mathbb{A}^1 \setminus \{2\}$, $C_2 := \{2\}$, $q_1(x) := x + 1$, and $q_2(x) := 1$ so that $p|_{C_1} = q_1$ and $p|_{C_2} = q_2$.
Our strategy would be to define an indicator sequence that will tell us whether $x_i \in C_2$ or not.
For doing this, we introduce two difference variables $Y$ and $Z$ and difference polynomials 
\[
  f_6 := Z(X - 2), \quad f_7 := Z + Y(X - 2) - 1.
\]
Consider any tuple of sequences $(x_i, y_i, z_i)_{i\in\mathbb{N}}$ satisfying $f_6 = f_7 = 0$.
Whenever $x_i \not\in C_2$, $f_6 = 0$ implies that $z_i = 0$.
If $x_i \in C_2$, then $f_7 = 0$ implies that $z_i = 1$.
Thus $z_i$ is an indicator for $x_i \in C_2$.
Therefore, we have
\begin{equation}\label{eq:p}
  p(x_i) = (1 - z_i)(x_i + 1) + z_1 \cdot 1 \text{ for every } i\in \mathbb{N}.
\end{equation}
We can now adapt the system~\eqref{eq:ex_system} from Example~\ref{ex:piecewise_simple} as follows.
We take the same $f_2, f_3, f_4, f_5$, but change $f_1$ to be
\[
  f_1 := \sigma(U) (X - (1 - Z)(X + 1) - Z)
\]
according to~\eqref{eq:p}.
Then, combining the argument from Example~\ref{ex:piecewise_simple} and this example, one can see that any sequence $(x_i)_{i \in \mathbb{N}}$ with 
\begin{equation}\label{eq:component_condition}
  x_0 = 0, \quad x_{i + i} = p(x_i), \quad \text{ and } \quad x_i \neq 0 \text{ for } i \geqslant 1
\end{equation}
can be extended to a solution of
\[
  f_1 = f_2 = \ldots = f_7 = 0, \quad g \neq 0.
\]
On the other hand, for every solution for the above system of difference equations, the $X$-component satisfies~\eqref{eq:component_condition} after removing several first terms.
\end{example}

\begin{proof}[Proof of Lemma~\ref{lemma: equivalence iteration and strong Nullstellensatz}]
	Let $\mathbf{p} = (p_1,\ldots,p_n)$. Since finite intersections of locally closed subsets are locally closed, we can find a partition $C_1,\ldots,C_m$ of $\mathbb{A}^n_k$ that works for every $p_i$. 
	For $j = 1,\ldots,m$ let $\mathbf{q}_j = (q_{j,1},\ldots,q_{j,n})\in k[\mathbf{X}]^n$ be such that $\mathbf{p}(a)=\mathbf{q}_j(a)$ for all $a\in C_j(K)$ and all field extensions $K$ of $k$.
	
	For every closed subset $W$ of $\mathbb{A}_k^n$ we define a polynomial system $S_W$ as follows. 
	Let $h_1, \ldots, h_t \in k[\mathbf{X}]$ be polynomials such that $W = V(h_1, \ldots, h_t)$. 
	Let $S_W=S_W(\mathbf{X},\mathbf{Y},Z)$ be the system in the variables $\mathbf{X}=(X_1,\ldots,X_n)$, $\mathbf{Y}=(Y_1,\ldots,Y_t)$ and $Z$ given by
	$$ Z h_1(\mathbf{X}),\ldots, Zh_t(\mathbf{X}),\ Z+Y_1h_1(\mathbf{X})+\ldots+Y_th_t(\mathbf{X})-1.$$
	Note that for a field extension $K$ of $k$ and a solution $(\mathbf{x},\mathbf{y},z)\in K^{n + t + 1}$  
	we have $z = 1$ if $\mathbf{x} \in W$ and $z = 0$ if $\mathbf{x} \notin W$. 
	Moreover, for every field extension $K$ of $k$ and $\mathbf{x}\in K^n$, there exist $\mathbf{y}\in K^t$ and $z\in K$ such that $(\mathbf{x},\mathbf{y},z)$ is a solution of $S_W$.

	Now for every $j=1,\ldots,m$ write $C_j=W_j\smallsetminus W'_j$, where $W_j, W_j'$ are closed subsets of $\mathbb{A}^n_k$ with $W_j'\subseteq W_j$ and consider the systems $S_j=S_{W_j}=S_{W_j}(\mathbf{X},\mathbf{Y}_j,Z_j)$ and $S'_j=S_{W'_j}=S_{W'_j}(\mathbf{X},\mathbf{Y}'_j,Z'_j)$.  Let $g_1,\ldots,g_s\in k[\mathbf{X}]$ be such that $V(g_1,\ldots,g_s)=V$.
	
	Let $S$ denote the system of difference equations in the variables 
	\[
	U,U',\mathbf{X},\mathbf{Y}_1,\ldots,\mathbf{Y}_m, Z_1,\ldots,Z_m,\mathbf{Y}'_1,\ldots,\mathbf{Y}'_m, Z'_1,\ldots,Z'_m
	\]
	given by
	\[
	S_1(\mathbf{X},\mathbf{Y}_1,Z_1),\;\ldots,\;S_m(\mathbf{X},\mathbf{Y}_m,Z_m),\;S'_1(\mathbf{X},\mathbf{Y}'_1,Z'_1),\;\ldots,\; S_m(\mathbf{X},\mathbf{Y}'_m,Z'_m),
	\]
	\[
	 \sigma(U)(\sigma(\mathbf{X}) - (\mathbf{q}_1(\mathbf{X})(Z_1-Z_1') + \ldots + \mathbf{q}_m(\mathbf{X})(Z_m-Z_m'))),
	 \]
	 \[
	U(U - 1),\; \; (\sigma(U) - U)(\sigma(U) - U - 1),
	\]
	\[
    U(X_nU'-1),\;\; (\sigma(U)-U)g_1(\mathbf{X}),\;\ldots,\;(\sigma(U)-U)g_s(\mathbf{X}).
    \]
	
	We will show that $S=\{f_1,\ldots,f_\ell\}$ and $g=\sigma(U)-U$ have the property of the lemma. 
	To this end, let us fix a field extension $K$ of $k$ and let us first assume that $$a=(u_i,u'_i,\mathbf{x}_i,\mathbf{y}_{1,i},\ldots,\mathbf{y}_{m,i},z_{1,i},\ldots,z_{m,i},\mathbf{y}'_{1,i},\ldots,\mathbf{y}'_{m,i},z'_{1,i},\ldots,z'_{m,i})_{i\in M}\in (K^M)^r$$ is a solution of $S$ such that $\sigma(U)-U$ does not vanish on $a$. 
	We observe that the equations $U(U - 1) = 0$ and  $(\sigma(U) - U)(\sigma(U) - U - 1) = 0$ imply that either $u_i=0$ for all $i$, $u_i=1$ for all $i$ or, there exists an $i_0\in M$, such that 
	$$u_i=\begin{cases}
	0 & \text{ for } i\leq i_0, \\
	1 & \text{ for } i> i_0
	\end{cases}.$$
	Since $\sigma(U)-U$ does not vanish on $a$, the sequence $(u_i)_{i\in M}$ is of the latter kind. 
	The equations $(\sigma(U)-U)g_1(\mathbf{X}) = \ldots = (\sigma(U)-U)g_s(\mathbf{X}) = 0$ imply that $g_1(\mathbf{x}_{i_0}) = \ldots = g_s(\mathbf{x}_{i_0})=0$, i.e., $\mathbf{x}_{i_0}\in V(K)$.
	
	For every $j=1,\ldots,m$ and $i\in M$, we have
	$$	z_{j,i}=\begin{cases} 1 \text{ if } \mathbf{x}_i\in W_j(K), \\
	0 \text{ if } \mathbf{x}_i\notin W_j(K).

	\end{cases}
	$$
	Similarly, 	$$	z'_{j,i}=\begin{cases} 1 \text{ if } \mathbf{x}_i\in W'_j(K), \\
	0 \text{ if } \mathbf{x}_i\notin W'_j(K).
	
	\end{cases}
	$$
	Therefore
		$$	z_{j,i}-z'_{j,i}=\begin{cases} 1 \text{ if } \mathbf{x}_i\in C_j(K), \\
	0 \text{ if } \mathbf{x}_i\notin C_j(K).
	
	\end{cases}
	$$
	 Thus the equations $\sigma(U)(\sigma(\mathbf{X})-(\mathbf{q}_1(\mathbf{X})(Z_1-Z_1')+\ldots+\mathbf{q}_m(\mathbf{X})(Z_m-Z_m'))) = 0$ show that $\mathbf{x}_{i+1}=\mathbf{p}(\mathbf{x}_i)$ for all $i\geq i_0$. 
	 Finally, the equation $U(U'X_n-1) = 0$ shows that $x_{n,i}\neq 0$ for $i> i_0$. 
	 Therefore the sequence $(\mathbf{x}_{i_0+i})_{i\in\mathbb{N}}$ has the desired properties.

	 \medskip
	 
	 Conversely, let us assume that the sequence $(\mathbf{x}_i)_{i\in\mathbb{N}}$ satisfies $\mathbf{x}_0\in V(K),\ \mathbf{x}_{i + 1} = \mathbf{p}(\mathbf{x}_i)$ for $i \in \mathbb{N}$ and $x_{n,i}\neq 0$ for $i\geq 1$. 
	 We extend this sequence to a solution 
	 $$a=(u_i,u'_i,\mathbf{x}_i,\mathbf{y}_{1,i},\ldots,\mathbf{y}_{m,i},z_{1,i},\ldots,z_{m,i},\mathbf{y}'_{1,i},\ldots,\mathbf{y}'_{m,i},z'_{1,i},\ldots,z'_{m,i})_{i\in M}\in (K^M)^r$$ of $S$
	 such that $g$ does not vanish at $a$. For $M=\mathbb{Z}$ we set $x_{j,i}=0$ for $i<0$ and $j=1,\ldots,m$.
	 We define 
	 $$u_i=\begin{cases} 1 \text{ for } i\geq 1, \\
	 0 \text{ otherwise}
	 \end{cases} \text{ and } u'_i=\begin{cases} \frac{1}{x_{n,i}} \text{ for } i\geq 1, \\
	 0 \text{ otherwise.}
	 \end{cases} $$
	 For $i\in M$ we choose $\mathbf{y}_{j,i}\in K^{s_j}$ and $z_{j,i}\in K$ such that $(\mathbf{x}_i,\mathbf{y}_{j,i},z_{j,i})$ is a solution of $S_j(\mathbf{X},\mathbf{Y}_j,Z_j)$. Similarly, 
	  we choose $\mathbf{y}'_{j,i}\in K^{s'_j}$ and $z'_{j,i}\in K$ such that $(\mathbf{x}_i,\mathbf{y}'_{j,i},z'_{j,i})$ is a solution of $S'_j(\mathbf{X},\mathbf{Y}'_j,Z'_j)$.
	  Then $a$ is a solution of $S$ such that $g$ does not vanish at $a$.
\end{proof}

We will need one more preparatory lemma for the proof of Theorem \ref{thm:stong_undecidable}. For every $n$, by $T_n$ we will denote the sequence of all nondecreasing $n$-tuples of nonnegative integers listed in ascending colexicographic order.
For example,
\[
T_1 = ((0), (1), (2), (3), \ldots) \quad\text{ and }\quad T_2 = ((0, 0),\; (0, 1),\; (1, 1),\; (0, 2),\; (1, 2),\; (2, 2),\ldots).
\]

\begin{lemma} \label{lemma: get Tn}
	For every $n\geq 1$, there exists a piecewise polynomial map $p\colon\mathbb{A}^{n}_{\mathbb{Q}}\to\mathbb{A}^{n}_{\mathbb{Q}}$
	such that for the sequence $(\bm{x}_i)_{i\in \mathbb{N}}=(x_{1,i},\ldots,x_{n,i})_{i\in\mathbb{N}}$ defined by
		\[
	\mathbf{x}_0=(0,\ldots,0) \ \;\&\; \ \mathbf{x}_{i + 1} = \mathbf{p}(\mathbf{x}_i) \text{ for all } i\in \mathbb{N},
	\]
	we have  $(\mathbf{x}_i)_{i \in \mathbb{N}} = T_n$.
\end{lemma}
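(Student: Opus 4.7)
The plan is to proceed by induction on $n$. For the base case $n = 1$, the polynomial map $p^{(1)}(x) = x + 1$ iterated from $0$ produces $T_1 = (0, 1, 2, \ldots)$. For the inductive step, I would assume a piecewise polynomial map $p^{(n-1)} \colon \mathbb{A}^{n-1}_{\mathbb{Q}} \to \mathbb{A}^{n-1}_{\mathbb{Q}}$ with partition $\mathbb{A}^{n-1}_{\mathbb{Q}} = \bigsqcup_j D_j$ has been constructed so that iteration from the origin produces $T_{n-1}$, and define $p^{(n)}$ in two regimes: on the closed subset $V(X_{n-1} - X_n) \subset \mathbb{A}^n_{\mathbb{Q}}$, set $p^{(n)}(x_1, \ldots, x_n) = (0, \ldots, 0, x_n + 1)$, incrementing the last coordinate and zeroing out the rest; on the open complement, set $p^{(n)}(x_1, \ldots, x_n) = (p^{(n-1)}(x_1, \ldots, x_{n-1}), x_n)$, advancing the first $n - 1$ coordinates by the inductive successor while holding $x_n$ fixed.

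To see that $p^{(n)}$ is genuinely piecewise polynomial, I would refine the open piece using the inductive partition: each intersection $\bigl(\mathbb{A}^n_{\mathbb{Q}} \smallsetminus V(X_{n-1} - X_n)\bigr) \cap (D_j \times \mathbb{A}^1_{\mathbb{Q}})$ is locally closed, on it $p^{(n-1)}$ restricts to a genuine polynomial tuple, and together with $V(X_{n-1} - X_n)$ these pieces form a partition of $\mathbb{A}^n_{\mathbb{Q}}$ into locally closed subsets carrying polynomial maps. Since the inductive construction is algorithmic, so is this one.

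For correctness of the iteration, the key structural observation about $T_n$ is that it decomposes into blocks indexed by the value of the last coordinate: for each $k \geq 0$, the block with $x_n = k$ consists of all nondecreasing $n$-tuples $(y_1, \ldots, y_{n-1}, k)$ with $0 \leq y_1 \leq \cdots \leq y_{n-1} \leq k$, listed in colex order of the first $n - 1$ entries, and this set of $(n-1)$-tuples coincides with the initial segment of $T_{n-1}$ ending at $(k, k, \ldots, k)$. Consequently, once the iteration of $p^{(n)}$ enters the $k$-th block at $(0, \ldots, 0, k)$, repeated application of the open-rule evolves the first $n - 1$ coordinates via $p^{(n-1)}$ through exactly that initial segment, until they reach $(k, \ldots, k)$ and the condition $x_{n-1} = x_n$ triggers the closed-rule, sending the state to $(0, \ldots, 0, k + 1)$, i.e., the start of the next block.

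The main subtlety I expect is verifying that the inductive successor $p^{(n-1)}$ does not overshoot the ceiling $k$ within the $k$-th block, which would contaminate the block with tuples of last entry $> k$. This is automatic, however: the open-rule is invoked only when $x_{n-1} < x_n = k$, and by the inductive structure of $T_{n-1}$ a single application of $p^{(n-1)}$ either preserves the last coordinate or increments it by one, yielding at most $x_{n-1} + 1 \leq k$; so the block is exited precisely at the terminal tuple $(k, \ldots, k)$, right when the closed-rule takes over and begins the next block.
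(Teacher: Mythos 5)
Your inductive strategy is reasonable in outline, but the exit condition for a block is wrong, and the error is genuine, not cosmetic. You leave the $k$-th block (and jump to $(0,\ldots,0,k+1)$) as soon as $x_{n-1}=x_n$, but the block of $T_n$ with last coordinate $k$ ends at $(k,\ldots,k)$, i.e.\ when \emph{all} coordinates are equal --- and $x_{n-1}$ reaches $k$ long before that. Concretely, for $n=3$ the block with $x_3=1$ is $(0,0,1),(0,1,1),(1,1,1)$; at $(0,1,1)$ your closed rule fires (since $x_2=x_3=1$) and sends the state to $(0,0,2)$, skipping $(1,1,1)$. The problem is systematic: since colex order sorts $T_{n-1}$ by its last coordinate first, the initial segment of $T_{n-1}$ ending at $(k,\ldots,k)$ has a long tail of tuples whose last entry is already $k$, and your rule aborts the block at the first of them, $(0,\ldots,0,k,k)$. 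Your "no overshoot" argument in the last paragraph shows correctly that $p^{(n-1)}$ never pushes $x_{n-1}$ past $k$, but that is not the point; the conclusion "so the block is exited precisely at the terminal tuple $(k,\ldots,k)$" does not follow, because the closed rule is triggered by $x_{n-1}=x_n$ rather than by the first $n-1$ coordinates having reached the end of their initial segment. (Your construction does happen to work for $n=2$, which may be why the error went unnoticed.)

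The fix is to reset only on the smaller closed set $V(X_1-X_2,\ldots,X_{n-1}-X_n)$, i.e.\ when $x_1=\cdots=x_n$; with that change the induction goes through. This essentially recovers the paper's (non-inductive) proof, which writes down the colex successor rule directly: increment $x_r$ at the unique $r<n$ with $x_1=\cdots=x_r\neq x_{r+1}$, and reset to $(0,\ldots,0,x_n+1)$ exactly when $x_1=\cdots=x_n$.
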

\begin{proof}
	The successor of a nondecreasing $n$-tuple $(a_1,\ldots,a_n)\in\mathbb{N}^n$ in $T_n$ is $(a_1,\ldots,a_{r-1},a_r+1,a_{r+1},\ldots,a_n)$ if there exists an $r$ with $1\leq r< n $ such that $a_1=\ldots=a_r\neq a_{r+1}$ and $(0,\ldots,0,a_n+1)$ if there exists no such $r$, i.e., if $a_1=\ldots=a_n$. Thus, the piecewise polynomial map $\mathbf{p} = (p_1,\ldots,p_n)$ defined by
	$$p_i(x_1,\ldots,x_n)=\begin{cases} x_i+1 \text{ if } x_1=\ldots=x_i\neq x_{i+1}, \\
	0 \text{ if } x_1=\ldots=x_n, \\
	x_i \text{ otherwise}, \end{cases}$$
	for $i=1,\ldots,n-1$ and
	$$p_n(x_1,\ldots,x_n)=\begin{cases} x_n+1 \text{ if } x_1=\ldots=x_n, \\
	x_n \text{ otherwise},
	\end{cases}
	$$
	has the desired property.
\end{proof}

\begin{proof}[Proof of Theorem \ref{thm:stong_undecidable}]
We will prove Theorem~\ref{thm:stong_undecidable} by showing that the decidability of the problem of Theorem~\ref{thm:stong_undecidable}  implies the decidability of Hilbert's tenth problem for the integers.
Let $P \in \mathbb{Z}[t_1, \ldots, t_n]$ with $P(0,\ldots,0)\neq 0$ and consider the piecewise polynomial map $\mathbf{q}\colon \mathbb{A}^m_{\mathbb{Q}}\to\mathbb{A}^m_\mathbb{Q}$, where $m=n\cdot n!+1$, defined as follows: 
thinking of $\mathbb{A}^m_\mathbb{Q}$ as $(\prod_{\pi\in S_n}\mathbb{A}^n_\mathbb{Q})\times \mathbb{A}^1_\mathbb{Q}$ we write $\bm{x}=((\bm{x}_{\pi})_{\pi\in S_n},x_{r})$, where each $\bm{x}_{\pi}$ is an $n$-tuple. 
We set 
\[
  \mathbf{q}(\bm{x}) = \left((\mathbf{p}_\pi(x_\pi))_{\pi\in S_n},\prod_{\pi\in S_n}P(\bm{x}_\pi)\right),
  \]
where $\mathbf{p}_\pi\colon\mathbb{A}^n_\mathbb{Q}\to\mathbb{A}^n_\mathbb{Q}$ is the map $\mathbf{p} \colon\mathbb{A}^n_\mathbb{Q}\to\mathbb{A}^n_\mathbb{Q}$ from Lemma \ref{lemma: get Tn} but conjugated with the permutation $\pi$. 
So, if we define $(\bm{x}_i)_{i\in\mathbb{N}}\in (\mathbb{Q}^\mathbb{N})^m$ by 
$\bm{x}_0=(0,\ldots,0)$ and $\bm{x}_{i+1}= \mathbf{q}(\bm{x}_i)$ for $i\geq 0$, 
we see that, for every element $a$ of $\mathbb{N}^n$, there exist $i\in\mathbb{N}$ and $\pi \in S_n$ such that $(\bm{x}_{i})_\pi = a$. 
It follows that $x_{r,i}\neq 0$ for every $i\geq 1$ if and only if $P$ has no solution in $\mathbb{N}^n$. 
Thus, by Lemma \ref{lemma: equivalence iteration and strong Nullstellensatz} there exist an integer $r\geq 1$ and difference polynomials $f_1,\ldots,f_\ell,g\in \mathbb{Q}[\sigma^i(T_1),\ldots,\sigma^i(T_r) \mid i\in\mathbb{N}]\subseteq k_0[\sigma^i(T_1),\ldots,\sigma^i(T_r) \mid i\in\mathbb{N}]$ such that $g$ does not vanish on every solution of $f_1=\ldots=f_\ell=0$ in $k^M$ if and only if $P$ has no solution in $\mathbb{N}^n$.
\end{proof}

\begin{proof}[Proof of Corollary~\ref{cor:strong_undecidability}]
  The undecidability of~\ref{prob:1ineq} follows from Theorem~\ref{thm:stong_undecidable} and the fact that the system $f_1 = \ldots = f_\ell = 0,\ g\neq 0$ has a solution in $k^M$ if and only if $g = 0$ does not hold for some solution of $f_1 = \ldots = f_\ell = 0$ in $k^M$.
  
  Let $K$ be an uncoutable algebraically closed field containing $k$.
  Theorem~\ref{thm:nullstellensatz_over_constants} implies that 
  \[
        g \in \sqrt{ \langle \sigma^m(f_1), \ldots, \sigma^m(f_\ell) \mid m \in M \rangle }
  \]
  if and only if $g = 0$ vanishes on every solution of $f_1 = \ldots = f_\ell = 0$ in $K^M$.
  Thus, the undecidability of~\ref{prob:radical_ideal_membership} follows from Theorem~\ref{thm:stong_undecidable}.
\end{proof}


\subsection{Proof of Proposition~\ref{prop:2d}}

We will first consider the case $M = \mathbb{Z}^2$ and then reduce the case $M = \mathbb{N}^2$ to it.

Consider a set $\mathcal{D} = \{D_1, \ldots, D_n\}$ of dominos (in the sense of~\cite[p.~1]{Berger}) such that the labels on the edges are integers from $1$ to $N$.
We will construct a finite set $F \subset \mathbb{Q}[\sigma^m(X), \sigma^m(Y) \mid m \in \mathbb{Z}^2]$ such that the tilings of the plane by $\mathcal{D}$ correspond bijectively to the solutions of $F = 0$ in $k^{\mathbb{Z}^2}$.

For every $1 \leqslant i \leqslant n$, by $D_i(l), D_i(r), D_i(t)$, and $D_i(b)$ we denote the marks on the left, right, top, and bottom edges of $D_i$, respectively.
Let
\begin{multline}\label{eq:Z2system}
F := \{ (X - 1)(X - 2)\ldots(X - N),\;\; (Y - 1)(Y - 2)\ldots(Y - N),\\ \prod\limits_{k = 1}^n  \bigl((D_k(b) - X)^2 + (D_k(t) - \sigma^{(0, 1)}(X))^2 + (D_k(l) - Y)^2 + (D_k(r) - \sigma^{(1, 0)}(Y))^2\bigr)\}.
\end{multline}

Consider any tiling of the plane by dominos from $\mathcal{D}$. 
For every $i, j \in \mathbb{Z}$, we denote 
\begin{itemize}
    \item  the mark on the edge connecting the points $(i, j)$ and $(i + 1, j)$ by $x_{i, j}$;
    \item the mark on the edge connecting the points $(i, j)$ and $(i, j + 1)$ by $y_{i, j}$.
\end{itemize}
Then $((x_{i, j})_{i, j\in \mathbb{Z}}, (y_{i, j})_{i, j\in \mathbb{Z}})$ is a solution of $F = 0$ in $k^{\mathbb{Z}^2}$ because
\begin{itemize}
    \item all marks are integers from $1$ to $N$, so the first two polynomials in $F$ vanish
    \item and the last polynomial in $F$ vanishes if and only if each square is covered by a domino from $\mathcal{D}$.
\end{itemize}

For the other direction, let $((x_{i, j})_{i, j\in \mathbb{Z}}, (y_{i, j})_{i, j\in \mathbb{Z}})$ be a solution of $F = 0$ in $k^{\mathbb{Z}^2}$.
Then all $x_{i, j}$'s and $y_{i, j}$'s are integers from $1$ to $N$, so they are valid edge marks.
Moreover, if we mark the edges of the integer lattice by numbers $x_{i, j}$ and $y_{i, j}$ as described above, then the fact that $((x_{i, j})_{i, j\in \mathbb{Z}}, (y_{i, j})_{i, j\in \mathbb{Z}})$ satisfies the last equation in $F = 0$ implies that these marks produce a tiling by dominoes from $\mathcal{D}$.

Since the problem of determining whether there is a tiling of the plane by a given set of dominoes is undecidable~\cite[page 2]{Berger}, the problem of determining consistency of a system of $\mathbb{Z}^2$-poynomials in $k^{\mathbb{Z}^2}$ is also undecidable.

The undecidability of the consistency problem for $M = \mathbb{N}^2$ follows from the above argument and the following lemma.

\begin{lemma}
  Consider $F \subset \mathbb{Q}[ \sigma^m(X), \sigma^m(Y) \mid m\in \mathbb{N}^2]$ defined by~\eqref{eq:Z2system}.
  Then $F = 0$ has a solution in $k^{\mathbb{Z}^2}$ if and only if it has a solution in $k^{\mathbb{N}^2}$.
\end{lemma}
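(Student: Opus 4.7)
The plan is to prove the two directions of the equivalence separately. The forward direction is immediate: if $((x_{i,j})_{(i,j) \in \mathbb{Z}^2}, (y_{i,j})_{(i,j) \in \mathbb{Z}^2})$ is a solution of $F = 0$ in $k^{\mathbb{Z}^2}$, then its restriction to the subsemigroup $\mathbb{N}^2$ solves $F = 0$ in $k^{\mathbb{N}^2}$, because each equation of $F$ evaluated at an index $(i, j) \in \mathbb{N}^2$ only involves the values at $(i, j)$, $(i+1, j)$, and $(i, j+1)$, all of which lie in $\mathbb{N}^2$.

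For the converse, I would first re-interpret the hypothesis combinatorially. Since $\operatorname{char} k = 0$, the elements $1, 2, \ldots, N$ are distinct in $k$, so the first two polynomials of $F$ force $x_{i,j}, y_{i,j} \in \{1, \ldots, N\}$, and then the third polynomial of $F$ is equivalent to the assertion that the induced edge labeling of the first quadrant is a valid tiling by $\mathcal{D}$. So the task reduces to showing that if $\mathcal{D}$ tiles the quadrant $\mathbb{N}^2$, then $\mathcal{D}$ tiles the full plane $\mathbb{Z}^2$; once such a plane tiling is produced, reading off its edge labels as in \eqref{eq:Z2system} yields a solution in $k^{\mathbb{Z}^2}$.

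The key step is the classical compactness theorem for tilings: $\mathcal{D}$ tiles $\mathbb{Z}^2$ if and only if $\mathcal{D}$ tiles arbitrarily large axis-aligned squares. The quadrant tiling, restricted to the cells inside $[0, 2n] \times [0, 2n]$ and then translated by $(-n, -n)$, provides a valid tiling $T_n$ of the $(2n+1) \times (2n+1)$ square centered at the origin. Extending each $T_n$ arbitrarily to all of $\mathbb{Z}^2$, one obtains a sequence in the compact product space $\mathcal{D}^{\mathbb{Z}^2}$, and any subsequential limit is a tiling of $\mathbb{Z}^2$ by $\mathcal{D}$; equivalently, one can argue by König's lemma, since at each cell of $\mathbb{Z}^2$ only finitely many tile choices are available.

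The main obstacle, if any, is purely expository: this compactness argument is completely classical in the theory of Wang tiles (cf.\ \cite{Berger}), so no genuinely new idea is required. The remainder is just unwinding the correspondence between solutions of $F = 0$ and tilings, which is identical to the argument already given for $M = \mathbb{Z}^2$ in the preceding paragraphs.
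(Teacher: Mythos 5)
Your proof is correct, but it takes a genuinely different route from the paper's. You argue combinatorially: a solution in $k^{\mathbb{N}^2}$ is (because the values are forced into $\{1,\ldots,N\}$) the same thing as a tiling of the quadrant, and the classical compactness theorem for Wang tilings (tilings of arbitrarily large squares yield, via K\"onig's lemma or compactness of $\mathcal{D}^{\mathbb{Z}^2}$, a tiling of the whole plane) upgrades this to a tiling of $\mathbb{Z}^2$ and hence to a solution in $k^{\mathbb{Z}^2}$. The paper instead argues algebraically by contraposition: if $F=0$ has no solution in $k^{\mathbb{Z}^2}$, then (since the first two equations confine any solution over a larger field to the integers $1,\ldots,N$, which already lie in $k$) it has no solution in $K^{\mathbb{Z}^2}$ for an uncountable algebraically closed $K\supseteq k$, so by Theorem~\ref{thm:nullstellensatz_over_constants} one gets a finite certificate $1=\sum c_{i,j}\,\sigma^{(i,j)}(f_\ell)$ with all shifts bounded by some $H$; applying $\sigma^{(H,H)}$ moves this certificate into $K[\sigma^m(X),\sigma^m(Y)\mid m\in\mathbb{N}^2]$, showing $F=0$ is already inconsistent over $\mathbb{N}^2$. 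Your approach is more elementary and self-contained modulo standard tiling theory, and it makes the geometric content transparent; the paper's approach avoids importing the compactness theorem for tilings and instead showcases the Nullstellensatz it has just proved, with the ``shift the certificate by $\sigma^{(H,H)}$'' trick being the only new ingredient. One small point worth making explicit in your write-up (the paper is equally terse here): the third polynomial is a product of sums of squares, and its vanishing forces some factor's four squares to vanish individually only because the quantities being squared are differences of rational integers, not arbitrary elements of $k$.
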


\begin{proof}
  Consider a solution of $F = 0$ in $k^{\mathbb{Z}^2}$. 
  If we restrict it on $\mathbb{N}^2$, we will obtain a solution of $F = 0$ in $k^{\mathbb{N}^2}$.
  
  Assume that $F = 0$ does not have a solution in $k^{\mathbb{Z}^2}$. 
  Let $K$ be an uncountable algebraically closed field containing $k$.
  The first two equations of $F = 0$ force all the coordinates of any solution of $F = 0$ in $K$ be integers from $1$ to $N$.
  Thus, $F = 0$ does not have a solution in $K^{\mathbb{Z}^2}$ as well.
  Then Theorem~\ref{thm:nullstellensatz_over_constants} implies that $1$ belongs to the $\mathbb{Z}^2$-invariant ideal generated by $F = \{f_1, f_2, f_3\}$, that is, there exists a positive integer $H$ such that 
  \begin{equation}\label{eq:expansion1}
  1 = \sum\limits_{\ell = 1}^3 \left( \sum\limits_{-H \leqslant i, j \leqslant H} c_{i, j} \sigma^{(i, j)}(f_\ell) \right),
  \end{equation}
  where $c_{i, j} \in K[\sigma^m(X), \sigma^m(Y) \mid m\in \mathbb{Z}^2]$ and $-H \leqslant a, b \leqslant H$ for every $\sigma^{(a, b)}$ appearing in $c_{i, j}$.
  Acting by $\sigma^{(H, H)}$ on~\eqref{eq:expansion1}, we conclude that $1$ belongs to the $\mathbb{N}^2$-invariant ideal generated by $F$ in $K[\sigma^m(X), \sigma^m(Y) \mid m\in \mathbb{N}^2]$.
  Thus, $F = 0$ does not have solutions in $k^{\mathbb{N}^2}$.
\end{proof}


\subsection{Proof of Proposition~\ref{prop:free_monoid}}

We will prove Proposition~\ref{prop:free_monoid} by reducing to Corollary~\ref{cor:strong_undecidability}.
More precisely, for every set of difference polynomials $f_1, \ldots, f_\ell, g \in k_0[\sigma^i(\mathbf{X}) \mid i \in \mathbb{N}]$ with $\mathbf{X} = (X_1, \ldots, X_n)$, we will construct a system $F = 0$ of $M_2$-polynomials over $k_0$ such that there exists a solution of $f_1 = \ldots = f_\ell = 0,\; g\neq 0$ in $k^{\mathbb{N}}$ if and only if $F = 0$ has a solution in $k^{M_2}$.

By adding new variables and equations, we may assume that $g \in k_0[\mathbf{X}]$  and $f_1, \ldots, f_\ell \in k_0[\mathbf{X}, \sigma(\mathbf{X})]$.
Let $\mathbf{Y} = (Y_1, \ldots, Y_{n})$, and denote the generators of $M_2$ by $a$ and $b$.
From $f_1, \ldots, f_\ell, g$, we obtain $\tilde{f}_1, \ldots, \tilde{f}_\ell, \tilde{g} \in k_0[\sigma^m(\mathbf{Y}), \sigma^m(Z) \mid m \in M_2]$ by replacing every $\sigma$ by $\sigma^a$ and every $X_i$ by $Y_i$.
Then we set
\[
  F := \{\tilde{f}_1, \ldots, \tilde{f}_\ell, Z\sigma^b(\tilde{g}) - 1\}.
\]
Let $(\bm{y}_m, z_m)_{m \in M_2}$ be a solution of $F = 0$ in $k^{M_2}$.
Then $\tilde{f}_1 = \ldots = \tilde{f}_\ell = 0$ implies that 
$\{ \mathbf{y}_{ba^i}\}_{i \in \mathbb{N}}$ 
is a solution of $f_1 = \ldots = f_\ell = 0$ in $k^{\mathbb{N}}$.
Furthermore, the equation $Z\sigma^b(\tilde{g}) - 1 = 0$ implies that $g(\bm{y}_b) \neq 0$, so $g$ does not vanish on this solution.

Conversely, let $(\mathbf{x}_i)_{i \in \mathbb{N}}$ be a solution of $f_1 = \ldots = f_\ell = 0,\; g\neq 0$.
By applying $\sigma$ to it, we may further assume that $c := g(\mathbf{x}_0) \neq 0$.
For every $m \in M_2$, we denote with $A(m)$ the largest $i \in \mathbb{N}$ such that $m$ can be written as $m'a^i$ for some $m' \in M_2$.
For every $m \in M_2$, we define $\mathbf{y}_m := \mathbf{x}_{A(m)}$ and $z_m := c^{-1}$.
We claim that $(\mathbf{y}_m, z_m)_{m \in M_2}$ is a solution of $F = 0$.
Let $\mathbf{t}_0$ and $\mathbf{t}_1$ be $n$-tuples of new algebraic indeterminates.
For every $1 \leqslant i \leqslant \ell$, let $P_i \in k_0[\mathbf{t}_0, \mathbf{t}_1]$ be a polynomial such that $f_i(\mathbf{X}) = P_i(\mathbf{X}, \sigma(\mathbf{X}))$.
Then $\tilde{f}_i(\mathbf{Y}) = P_i(\mathbf{Y}, \sigma^a(\mathbf{Y}))$.
For every $m_0 \in M_2$, we have
\[
\tilde{f}_i((\mathbf{y}_m)_{m \in M_2})_{m_0} = P_i(\mathbf{y}_{m_0}, \mathbf{y}_{m_0a}) = P_i(\mathbf{x}_{A(m_0)}, \mathbf{x}_{A(m_0a)}) = P_i(\mathbf{x}_{A(m_0)}, \mathbf{x}_{A(m_0) + 1}) = f_i((\mathbf{x}_i)_{i \in \mathbb{N}})_{A(m_0)} = 0.
\]
Let $Q \in k_0[\mathbf{t}_0]$ be a polynomial such that $g(\mathbf{X}) = Q(\mathbf{X})$ and $\tilde{g}(\mathbf{Y}) = Q(\mathbf{Y})$.
Then, for every $m_0 \in M_2$, we also have 
\[
\sigma^b(\tilde{g}((\mathbf{y}_m)_{m \in M_2}))_{m_0} = Q(\mathbf{y}_{m_0b}) = Q(\mathbf{x}_0) = g(\mathbf{x}_0) = c. 
\]
This proves the claim.


\subsection{Proof of Theorem~\ref{theo:counter strong_in_main}}
\label{subsec:Counterexample}

In this section we present an example that shows that the assumption $|k|>|M|$ cannot be omitted from Theorem \ref{thm:nullstellensatz_over_constants}. In more detail, we present a finite system $F\subseteq \overline{\mathbb{Q}}[\sigma^i(T_1),\ldots,\sigma^i(T_r) | \ i \in \mathbb{N} ]$ of difference polynomials (with respect to $M=\mathbb{N}$) such that $\mathcal{I}(\mathcal{V}(F))\supsetneqq\sqrt{\langle \sigma^i(F) \mid i \in \mathbb{N}\rangle}$.

Before going into the details of the construction of $F$ we explain the underlying ideas. 
Very roughly, the idea is to construct a piecewise polynomial map $\mathbf{p} \colon \mathbb{A}^n_{\mathbb{Q}}\to\mathbb{A}^n_{\mathbb{Q}}$ that can detect if a given number is algebraic or transcendental and then to obtain $F$ from $\mathbf{p}$ via Lemma \ref{lemma: equivalence iteration and strong Nullstellensatz}.
More precisely, we will proceed in the following steps:
\begin{enumerate}[label=(\alph*)]
    \item Construct a piecewise polynomial map $\mathbf{p} \colon \mathbb{A}^n_{\mathbb{Q}}\to\mathbb{A}^n_{\mathbb{Q}}$
such that for $\mathbf{x}_0=(c,0,\ldots,0,1)\in \mathbb{C}^n$ and $\mathbf{x}_{i+1}=\mathbf{p}(x_i)$ we have the following property: 
\[
  \text{the sequence } (x_{n,i})_{i\in\mathbb{N}} \text{ contains } 0 \iff c \in \overline{\mathbb{Q}}.
\]
    \item Apply Lemma~\ref{lemma: equivalence iteration and strong Nullstellensatz} with $V = \mathbb{A}^1_{\mathbb{Q}}\times\{0\}\times\ldots\times\{0\}\times \{1\} \subseteq \mathbb{A}^n_{\mathbb{Q}}$ and $\mathbf{p}$ being the map constructed in the previous step.
    This gives rise to difference polynomials $f_1,\ldots,f_\ell,g\in\mathbb{Q}[\sigma^i(T_1),\ldots,\sigma^i(T_r)|\ i\in\mathbb{N}]$ such that for every field extension $K$ of $\mathbb{Q}$ the following are equivalent: 
    \begin{itemize}
        \item $g$ vanishes on every solution of $f_1=\ldots=f_\ell=0$ in $(K^\mathbb{N})^r$;
        \item $K \subseteq \overline{\mathbb{Q}}$.
    \end{itemize}
    \item Taking $K=\overline{\mathbb{Q}}$, we see that $g\in \mathcal{I}(\mathcal{V}(F))$. 
    On the other hand, since there is a solution of $f_1=\ldots=f_\ell=0$ in $(\mathbb{C}^\mathbb{N})^r$, on which $g$ does not vanish, we conclude that $g\notin\sqrt{\langle \sigma^i(F) \mid i \in \mathbb{N}\rangle}$. 
\end{enumerate}

The piecewise polynomial map $\mathbf{p} \colon \mathbb{A}^n_{\mathbb{Q}}\to\mathbb{A}^n_{\mathbb{Q}}$ is explicitly given below (indeed we will see that one can choose $n=5$) and the proof of Lemma \ref{lemma: equivalence iteration and strong Nullstellensatz} is constructive. So, in principle it would be possible to explicitly determine $r$, $F=\{f_1,\ldots,f_\ell\}\subseteq \overline{\mathbb{Q}}[\sigma^i(T_1),\ldots,\sigma^i(T_r)|\ i\in\mathbb{N}]$ and $g\in \mathcal{I}(\mathcal{V}(F))\smallsetminus \sqrt{\langle \sigma^i(F) \mid i \in \mathbb{N}\rangle}$. However, since the piecewise polynomial map $\mathbf{p} \colon \mathbb{A}^5_{\mathbb{Q}}\to\mathbb{A}^5_{\mathbb{Q}}$ is already fairly complicated, this would be a very tedious task, yielding an enormously large system $F$. Moreover, we do not expect any deeper insight from determining $F$ explicitly.

We will next define the piecewise polynomial map $\mathbf{p} \colon \mathbb{A}^5_{\mathbb{Q}}\to\mathbb{A}^5_{\mathbb{Q}}$ 
that detects whether or not a given number is algebraic. 
Again, we first explain the underlying idea. 
The piecewise polynomial map $\mathbf{p} \colon \mathbb{A}^5_{\mathbb{Q}}\to\mathbb{A}^5_{\mathbb{Q}}$
should have the following property: 
If $K$ is a field extension of $\mathbb{Q}$, $\mathbf{x}_0=(c,0,0,0,1)\in K^5$ and $\mathbf{x}_{i+1}=\mathbf{p}(x_i)$, then $(x_{5,i})_{i\in\mathbb{N}}$ contains $0$ if and only if $c$ is algebraic. 
This property will be satisfied if the sequence $x_{5,i}$ consists of all expressions of 
the form $P(c)$, where $P$ ranges over all nonzero polynomials in $\mathbb{Z}[x]$. 
To achieve the latter, we will generate all elements of $\mathbb{Z}[x]$ under iteration. 
We use the observation that, up to multiplication with $\pm 1$, every element of $\mathbb{Z}[x]$ can be obtained from $1$ by iterating the following three operations (in a specific order): 
$P\mapsto P+1$, $P\mapsto xP$, $P\mapsto -xP$. 
We formulate a more precise statement in Lemma~\ref{eqn: def of Pa} below.

We set $P_{\varnothing}(x) = 1$ and for $a=(a_m,\ldots,a_0)\in \{0,1,2\}^{m+1}$ we define
$P_a(x)\in \mathbb{Z}[x]$ recursively by
\begin{equation} \label{eqn: def of Pa}
P_a(x)=\begin{cases}
xP_{a'}(x) \text{ if } a_m=0, \\
-xP_{a'}(x) \text{ if } a_m=1, \\
P_{a'}(x)+1 \text{ if } a_m=2,
\end{cases}
\end{equation}
where $a'=(a_{m-1},\ldots,a_0)$ (if $m = 0$, $a' = \varnothing$). 
For $N\in\mathbb{N}$ with base $3$ expansion $N=a_m3^m+a_{m-1}3^{m-1}+\ldots+a_0$, i.e., $a_0,\ldots,a_m\in\{0,1,2\}$ and $a_m\neq 0$, we set $P_N(x)=P_a(x)$ for $a=(a_m,\ldots,a_0)$.
For $N = 0$, we set $P_N(x) = P_\varnothing(x) = 1$.

\begin{lemma}\label{lem:possibe_P_N}
  For every nonzero polynomial $q(x) \in \mathbb{Z}[x]$, there exists an integer $N \geqslant 0$ such that $P_N(x)$ is equal to $q(x)$ or $-q(x)$.
\end{lemma}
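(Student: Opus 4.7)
The plan is a strong induction on $\deg q$. For the base case $\deg q = 0$, a nonzero integer $c \geq 1$ is obtained by iterating the third operation $P \mapsto P+1$ starting from $P_\varnothing = 1$, giving $P_{(2,\ldots,2)} = c$ with $c-1$ twos; the case $c \leq -1$ follows by negating the representation obtained for $-c$.

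For the inductive step, I would write $q(x) = x h(x) + c$ where $c = q(0)$ and $h = (q - c)/x \in \mathbb{Z}[x]$ is nonzero of degree $\deg q - 1$. By the induction hypothesis, there is some tuple $a$ with $P_a = h$ or $P_a = -h$. The crucial observation is that in either case, prepending $0$ (which gives $x P_a$) or prepending $1$ (which gives $-x P_a$) lets us realize \emph{both} $xh = P_{b_+}$ and $-xh = P_{b_-}$ for appropriate tuples $b_\pm$. If $c = 0$, we are done. If $c > 0$, prepending $c$ twos to $b_+$ yields $P_b = xh + c = q$; if $c < 0$, prepending $|c|$ twos to $b_-$ yields $P_b = -xh + |c| = -q$, so $-P_b = q$. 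In all cases, $q = \pm P_b$ for some tuple $b \in \{0,1,2\}^{\ast}$.

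It then remains to convert the tuple $b$ into an integer $N \geq 0$. If $b = \varnothing$, set $N = 0$. If the leading entry of $b$ is nonzero, $b$ is already the base-$3$ expansion of some $N \geq 1$. If the leading entry of $b$ is $0$, apply the identity $P_{(0, b')} = x P_{b'} = -(-x P_{b'}) = -P_{(1, b')}$ to rewrite $P_b$, at the cost of a sign flip, using a tuple whose leading entry is $1$; a single application suffices.

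The argument is essentially bookkeeping; the only subtle point is \emph{sign management}, because the operation $P \mapsto P+1$ can add $+1$ but never $-1$. One must therefore choose the sign of $xh$ (via prepending $0$ versus $1$) according to the sign of $c$, and similarly carry a sign through the base-case and leading-zero conversions. I do not anticipate any genuine obstacle beyond this careful sign tracking.
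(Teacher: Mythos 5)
Your proof is correct and follows essentially the same route as the paper: both arguments rest on the fact that the three operations generate every nonzero integer polynomial up to sign, and both repair a leading digit $0$ via the identity $P_{(0,b')}=xP_{b'}=-P_{(1,b')}$. The only difference is that you prove the generation claim by an explicit induction on degree (decomposing $q=xh+c$), whereas the paper simply asserts that the reachable set is exactly the nonzero polynomials with nonnegative constant coefficient; your version is, if anything, more complete.
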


\begin{proof}
The set of polynomials in $\mathbb{Z}[x]$ that can be obtained from $1$ by a finite sequence of the three operations $P(x) \mapsto xP(x)$, $P(x) \mapsto -xP(x)$, and $P(x) \mapsto P(x) + 1$ is the set of nonzero polynomials in $\mathbb{Z}[x]$ whose constant coefficient is nonnegative. Thus, up to multiplication with $\pm 1$ every nonzero polynomial in $\mathbb{Z}[x]$ can be obtained in this way.

The set of all $P_N(x)$'s consists of all polynomials in $\mathbb{Z}[x]$ that can be obtained from $1$ by a finite sequence of the three operations $P(x) \mapsto xP(x)$, $P(x) \mapsto -xP(x)$, and $P(x) \mapsto P(x) + 1$ under the additional assumption that the last operation is not $x\mapsto xP(x)$. This extra condition comes from the fact that in the base $3$ expansion $N=a_m3^m+a_{m-1}3^{m-1}+\ldots+a_0$ of $N$ one necessarily has $a_m\neq0$.

Let $q(x)\in\mathbb{Z}[x]$ be a nonzero polynomial. Multiplying $q(x)$ with $-1$ if necessary, we may assume that the constant coefficient of $q(x)$ is nonnegative. Thus, as observed above, $q(x)=P_a(x)$ for a suitable tuple $a=(a_m,\ldots,a_0)\in\{0,1,2\}^{m+1}$. If $a_m\neq 0$, then $q(x)=P_a(x)=P_N(x)$ for $N=a_m3^m+a_{m-1}3^{m-1}+\ldots+a_0$. If $a_m=0$, then $q(x)=-P_{\widetilde{a}}(x)=-P_{\widetilde{N}}(x)$ for $\widetilde{a}=(1,a_{m-1},\ldots,a_0)$ and $\widetilde{N}=1\cdot 3^m+a_{m-1}3^{m-1}+\ldots+a_0$.
\end{proof}

Now that we know how to iteratively produce all nonzero polynomials of $\mathbb{Z}[x]$, at least up to sign, we return to the definition of the piecewise polynomial map $\mathbf{p} \colon \mathbb{A}^5_{\mathbb{Q}}\to\mathbb{A}^5_{\mathbb{Q}}$ that should detect whether or not a given number $c$ is algebraic. 
The idea to produce all the $P_N(c)$'s as the entries of the sequence $x_{5,i}$ is to have one coordinate, say 
the second coordinate, 
loop through all the natural numbers $N$, while two other coordinates, 
say 
the third and the fourth coordinate, 
are used to compute the base $3$ expansion of $N$. 
This base $3$ expansion is then used to create $P_N(c)$ in the fifth coordinate according to the rule from (\ref{eqn: def of Pa}).

The computation of the base $3$ expansion of a given natural number $N$ in the second coordinate works as follows: 
The fourth coordinate 
starts looping from $0$, with increments of $1$, until it reaches a natural number $A_1$ with the property that $N - 3A_1 \in \{0,1,2\}$. 
In other words, $N-3A_1=a_0$, where $N=a_m3^m+a_{m-1}3^{m-1}+\ldots+a_0$ is the base $3$ expansion of $N$. 
Then $A_1$ is stored in 
the third coordinate 
and 
the fourth coordinate 
starts looping again from $0$, with increments of $1$, until it reaches a natural number $A_2$ with the property that $A_1 - 3A_2\in\{0,1,2\}$, i.e., $A_1 - 3A_2 = a_1$. 
Then $A_2$ is stored in the third coordinate and the process continues like this until we reach the index $m$, such that $A_m\in \{0,1,2\}$, i.e., $A_m = a_m$. 
At this point the full base $3$ expansion of $N$ has been computed and we start over with $N$ replaced by $N+1$.

Explicitly, the piecewice polynomial map $\mathbf{p} \colon \mathbb{A}^5_{\mathbb{Q}}\to\mathbb{A}^5_{\mathbb{Q}}$ is defined as $\mathbf{p}=(C,N,R,A,P)$, where 
$Q(x) := x(x - 1)(x - 2)$ and
\begin{align}
\begin{split}\label{eq:counterexample}
	C(\mathbf{x}) &= x_1,\\
	N(\mathbf{x}) &= \begin{cases}
		x_2 + 1, \text{ if } x_3 = 0,\\
		x_2, \text{ if } x_3\neq 0,
	\end{cases}\\
	R(\mathbf{x}) &= \begin{cases}
		x_2 + 1, \text{ if } x_3 = 0,\\
		x_3, \text{ if } x_3\neq 0 \ \& \ Q(x_3 - 3x_4) \neq 0,\\
			x_4, \text{ if } x_3\neq 0 \ \& \ Q(x_3 - 3x_4)=0 ,
	\end{cases}\\
	A(\mathbf{x}) &= \begin{cases}
	0, \text{ if } x_3=0, \\
		x_4 + 1, \text{ if }  x_3\neq 0 \ \& \ Q(x_3 - 3x_4) \neq 0, \\
			0, \text{ if } x_3\neq 0 \ \& \ Q(x_3 - 3x_4) = 0,\\
	\end{cases}\\
	P(\mathbf{x}) &= \begin{cases}
		1, \text{ if } x_3 = 0,\\
		x_5, \text{ if }  x_3\neq 0 \ \& \ Q(x_3 - 3x_4) \neq 0,\\
			x_5 x_1, \text{ if } x_3 \neq 0 \;\&\; x_3 - 3x_4 = 0,\\
		-x_5 x_1, \text{ if } x_3\neq 0 \ \& \ x_3 - 3x_4 = 1,\\
		x_5 + 1, \text{ if } x_3\neq 0 \ \& \ x_3 - 3x_4 = 2.
	\end{cases}
\end{split}
\end{align}

\begin{lemma} \label{lemma: describe x5}
	 Let $K$ be a field of characteristic zero and $c\in K$. Set $\mathbf{x}_0=(c,0,0,0,1)$ and $\mathbf{x}_{i+1}=\mathbf{p}(\mathbf{x}_i)$ for $i\geq 0$.
	Then every entry of the sequence $(x_{5,i})_{i\in\mathbb{N}}$ is either equal to $1$ or equal to $P_a(c)$ for some $a=(a_m,\ldots,a_0)\in \{0,1,2\}^{m+1}$. Moreover, for $N\geq 1$, every $P_N(c)$ eventually occurs in the sequence $(x_{5,i})_{i\in\mathbb{N}}$.
\end{lemma}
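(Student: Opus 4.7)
The first coordinate stays constant at $c$, since $C(\mathbf{x}) = x_1$, so I focus on the remaining four. The key structural observation is that configurations with $x_3 = 0$ act as \emph{reset states}; the plan is to prove by induction on $N \geq 0$ that the reset states, listed in chronological order, are exactly $(c, N, 0, 0, P_N(c))$, adopting the convention $P_0(c) := P_\varnothing(c) = 1$. The base case $\mathbf{x}_0 = (c, 0, 0, 0, 1)$ is given.

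For the inductive step, one step of $\mathbf{p}$ from $(c, N, 0, 0, P_N(c))$ uses the ``$x_3 = 0$'' branch of each of $N$, $R$, $A$, $P$, producing $(c, N+1, N+1, 0, 1)$. From here the trajectory enters a \emph{digit-extraction phase}, which I analyze by a secondary induction on the base-$3$ expansion. Write $N + 1 = a_m 3^m + \cdots + a_0$ with $a_m \neq 0$, and define $M_k := a_m 3^{m-k} + \cdots + a_k$ for $0 \leq k \leq m+1$, so that $M_0 = N+1$, $M_{m+1} = 0$, and $M_k = 3 M_{k+1} + a_k$. I will show that the trajectory visits, in order, the states $(c, N+1, M_k, 0, P_{(a_{k-1},\ldots,a_0)}(c))$ for $k = 0, 1, \ldots, m+1$; the $k = m+1$ state is precisely the reset state for $N+1$.

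Within one sub-phase, starting from $(c, N+1, M_k, 0, P_{(a_{k-1},\ldots,a_0)}(c))$, the coordinates $x_3$ and $x_5$ remain fixed while $x_4$ increments by one at each iteration (the ``$Q(x_3 - 3x_4) \neq 0$'' branch) until $Q(M_k - 3 x_4)$ first vanishes in $K$. Since $Q(y) = y(y-1)(y-2)$ and the integer $M_k - 3 x_4$ decreases by $3$ at each step, it first enters $\{0,1,2\}$ exactly when $x_4 = M_{k+1}$, at which instant its value is $a_k$. The next iteration applies the ``$Q = 0$'' branch: $x_3$ is replaced by $x_4 = M_{k+1}$, $x_4$ is reset to $0$, and $x_5$ is updated by one of the three rules $v \mapsto c v$, $v \mapsto -c v$, or $v \mapsto v + 1$ according to whether $a_k$ is $0$, $1$, or $2$. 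This is exactly the recursion (\ref{eqn: def of Pa}) turning $P_{(a_{k-1},\ldots,a_0)}(c)$ into $P_{(a_k, a_{k-1}, \ldots, a_0)}(c)$, completing the sub-induction.

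The step I expect to require the most care is the use of the characteristic-zero hypothesis on $K$: it is what guarantees that $Q(M_k - 3 x_4)$ vanishes in $K$ at exactly the intended integer iterate, with no spurious earlier roots coming from modular coincidences among the integers $M_k - 3 x_4$. Once both inductions are established, the conclusion is immediate: reset states contribute every $P_N(c)$ for $N \geq 1$, proving the second assertion, while intermediate states contribute either $1 = P_\varnothing(c)$ or some $P_{(a_{k-1},\ldots,a_0)}(c)$ with a nonempty tuple, proving the first.
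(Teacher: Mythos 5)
Your proposal is correct and follows essentially the same route as the paper's proof: both track the trajectory through the states with $x_3=0$ (your ``reset states'' $(c,N,0,0,P_N(c))$) and analyze the intervening digit-extraction sub-phases in which $x_4$ loops until $Q(x_3-3x_4)$ vanishes, extracting the base-$3$ digits $a_0,\ldots,a_m$ of $N$ and applying the recursion \eqref{eqn: def of Pa} to build $P_N(c)$ in the fifth coordinate. Your explicit double induction with the quantities $M_k=3M_{k+1}+a_k$ is just a slightly more formal packaging of the paper's walkthrough, and your remark about characteristic zero (needed so that distinct nonnegative integers remain distinct in $K$ and the branch tests fire only at the intended iterates) is exactly the point where the hypothesis is used.
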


\begin{proof}
The sequence $(x_{1,i})_{i\in\mathbb{N}}$ is constant with value $c$.
 The entries of the sequence $(x_{2,i})_{i\in\mathbb{N}}$ are in $\mathbb{N}$ and in the step $i\rightsquigarrow i+1$ the sequence remains constant or increases by one. We shall see that $(x_{2,i})_{i\in\mathbb{N}}$ eventually assumes every $N\in\mathbb{N}$. The sequences $(x_{3,i})_{i\in\mathbb{N}}$ and $(x_{4,i})_{i\in\mathbb{N}}$ also only take values in $\mathbb{N}$.
	
	Note that if $x_{3,i}\neq 0$ and $Q(x_{3,i}-3x_{4,i})\neq 0$, then in the step $i\rightsquigarrow i+1$ the value for $x_4$ increases by $1$ but the values of all the other $x_i$'s remain constant. Let us analyze what happens in the steps $i\rightsquigarrow i+1\rightsquigarrow i+2\ldots$ when $x_{3,i}=0$. Then the value for $x_2$ increases by $1$, say $x_{2,i+1}=N\geq 1$. We have $$\mathbf{x}_{i+1}=(c, N, N, 0, 1), \ \mathbf{x}_{i+2}=(c, N, N, 1, 1),\ \mathbf{x}_{i+3}=(c, N, N, 2, 1),\ldots$$ and this continues until we reach an $\ell_1\geq 1$ such that $a_0=N-3 x_{4,\ell_1}\in\{0,1,2\}$, i.e., until $x_{4,\ell_1}=\lfloor\frac{N}{3}\rfloor$. 
	Note that $a_0=N-3 x_{4,\ell_1}$ is the last coefficient in the base $3$ expansion $N=a_m3^m+\ldots+a_13+a_0$ of $N$. So $\mathbf{x}_{\ell_1}=(c,N,N,\lfloor\frac{N}{3}\rfloor,1)$ and because $x_{3,\ell_1}-3x_{4,\ell_1}=a_0\in\{0,1,2\}$ we have $x_{3,\ell_1}\neq 0$ and $Q(x_{3,\ell_1}-3x_{4,\ell_1})=0$. Thus, according to the definition of $\mathbf{p}$:
	$$\mathbf{x}_{\ell_1+1}=(c, N, \lfloor\tfrac{N}{3}\rfloor ,0,P_{a_0}(c)),\ \mathbf{x}_{\ell_1+2}=(c, N, \lfloor\tfrac{N}{3}\rfloor ,1,P_{a_0}(c)),\ \mathbf{x}_{\ell_1+3}=(c, N, \lfloor\tfrac{N}{3}\rfloor ,2,P_{a_0}(c)),\ldots$$
	and this continues until we reach an $\ell_2\geq \ell_1$ such that $a_1=\lfloor\tfrac{N}{3}\rfloor-3 x_{4,\ell_2}\in \{0,1,2\}$, i.e., until $x_{4,\ell_2}=\lfloor\tfrac{\lfloor\frac{N}{3}\rfloor}{3}\rfloor$. 
	So $\mathbf{x}_{\ell_2}=(c,N,\lfloor\frac{N}{3}\rfloor,\lfloor\tfrac{\lfloor\frac{N}{3}\rfloor}{3}\rfloor,P_{a_0}(c))$ and because $x_{3,\ell_2}-3x_{4,\ell_2}=a_1\in\{0,1,2\}$ we have
	 $$\mathbf{x}_{\ell_2+1}=(c, N, \lfloor\tfrac{\lfloor\frac{N}{3}\rfloor}{3}\rfloor,0,P_{(a_1,a_0)}(c)),\ \mathbf{x}_{\ell_2+2}=(c, N, \lfloor\tfrac{\lfloor\frac{N}{3}\rfloor}{3}\rfloor ,1,P_{(a_1,a_0)}(c)), \ldots$$
	 and so on, until we eventually reach an $\ell_m$ with $\ell_m\geq \ell_{m-1}\geq\ldots\geq \ell_1$, $a_{m-1}=x_{3,\ell_m}-3x_{4,\ell_m}\in\{0,1,2\}$ and $a_m=x_{4,\ell_m}\in\{1,2\}$. (The case $x_{4,\ell_m}=0$ does not occur because it contradicts the minimality of $\ell_m$.)
Then 
\[
\mathbf{x}_{\ell_m}=(c, N, a_m3+a_{m-1}, a_m, P_{(a_{m-2},\ldots,a_0)}(c))
\]
and because $x_{3,\ell_m}-3x_{4,\ell_m}=a_{m-1}\in\{0,1,2\}$ we have 
\[
\mathbf{x}_{\ell_m+1}=(c, N, a_m, 0, P_{(a_{m-1},\ldots,a_0)}(c)). 
\]
Since $x_{3,\ell_m+1}-3x_{4,\ell_m+1}=a_m\in\{1,2\}$ it follows from the definition of $\mathbf{p}$ that  $$ \ \mathbf{x}_{\ell_m+2}=(c, N, 0, 0, P_{(a_{m},\ldots,a_0)}(c))$$
and $$\mathbf{x}_{\ell_m+3}=(c, N+1, N+1, 0,1).$$
Thus the whole process repeats with $N$ incremented by $1$. Since $N=a_m3^m+\ldots+a_0$ the claim follows.
\end{proof}

Lemmas~\ref{lemma: describe x5} and~\ref{lem:possibe_P_N} imply the following corollary.

\begin{corollary} \label{cor: zero iff algebraic}
  	With notation as in Lemma \ref{lemma: describe x5} we have: The sequence $(x_{5,i})_{i\in\mathbb{N}}$ contains zero if and only if $c$ is algebraic over $\mathbb{Q}$. \qed
\end{corollary}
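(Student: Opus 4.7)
The plan is to combine the two preceding lemmas to read off the claim directly. Lemma~\ref{lemma: describe x5} tells us that, up to the trivial value $1$, the set of entries of the sequence $(x_{5,i})_{i\in\mathbb{N}}$ is exactly $\{P_N(c) \mid N\geq 1\}$ (every such value appears, and no other non-unit value appears). Lemma~\ref{lem:possibe_P_N} tells us that the family $\{P_N(x) \mid N\geq 1\}$ exhausts $\mathbb{Z}[x]\smallsetminus\{0\}$ up to sign. So the proof will just be a matter of matching these two statements.

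For the forward implication, I would start from an algebraic $c$ over $\mathbb{Q}$, clear denominators in its minimal polynomial to obtain a nonzero $q(x)\in\mathbb{Z}[x]$ with $q(c)=0$, and then apply Lemma~\ref{lem:possibe_P_N} to write $q(x)=\pm P_N(x)$ for some $N\geq 0$. Evaluating at $c$ gives $P_N(c)=0$; since $P_0(c)=P_\varnothing(c)=1\neq 0$ we have $N\geq 1$, so Lemma~\ref{lemma: describe x5} guarantees that $P_N(c)=0$ is an entry of $(x_{5,i})_{i\in\mathbb{N}}$.

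For the reverse implication, suppose $x_{5,i_0}=0$ for some $i_0$. By Lemma~\ref{lemma: describe x5} the only possibilities for $x_{5,i_0}$ are $1$ or $P_a(c)$ for some tuple $a\in\{0,1,2\}^{m+1}$. The first is ruled out, so $P_a(c)=0$. Since the construction in \eqref{eqn: def of Pa} produces only nonzero polynomials in $\mathbb{Z}[x]$ (the leading coefficient is always $\pm 1$, or the polynomial is a positive integer when $a$ consists solely of $2$'s), $c$ is a root of a nonzero element of $\mathbb{Z}[x]$ and hence is algebraic over $\mathbb{Q}$.

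There is no real obstacle here; the content has already been done inside the two lemmas. The one bookkeeping item worth stating explicitly in the write-up is the nonvanishing of $P_a(x)$ as an element of $\mathbb{Z}[x]$, which follows by a one-line induction on the length of $a$ tracking either the leading coefficient or (when the word ends in $2$'s only) the positive constant term.
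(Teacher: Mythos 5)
Your proof is correct and is exactly the argument the paper intends: the corollary is stated with an immediate \qed, deferring to the combination of Lemma~\ref{lemma: describe x5} and Lemma~\ref{lem:possibe_P_N} that you spell out. Your extra bookkeeping step (that each $P_a(x)$ is a nonzero element of $\mathbb{Z}[x]$) is a genuine detail worth noting, and it is consistent with the observation already made in the proof of Lemma~\ref{lem:possibe_P_N} that the reachable polynomials are precisely the nonzero ones with nonnegative constant coefficient.
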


We are now prepared to prove Theorem~\ref{theo:counter strong_in_main}.

\begin{proof}[Proof of Theorem~\ref{theo:counter strong_in_main}]
As above, we consider the piecewise polynomial map $\mathbf{p} \colon \mathbb{A}^5_{\mathbb{Q}}\to\mathbb{A}^5_{\mathbb{Q}}$	given by $\mathbf{p} = (C, N, R, A, P)$ with $C,N,R,A,P$ defined in~\eqref{eq:counterexample}. 
Let $V$ denote the closed subset of $\mathbb{A}^5_{\mathbb{Q}}$ defined by $X_2=X_3=X_4=0, X_5=1$. 
According to Lemma \ref{lemma: equivalence iteration and strong Nullstellensatz}, there exists an integer $r\geq 1$, a finite system $F=\{f_1,\ldots,f_\ell\}\subseteq \mathbb{Q}[\sigma^i(T_1),\ldots,\sigma^i(T_r) \mid i\in\mathbb{N}]$, and a difference polynomial $g\in \mathbb{Q}[\sigma^i(T_1),\ldots,\sigma^i(T_r) \mid i\in\mathbb{N}]$ such that, for every field extension $K$ of $\mathbb{Q}$, the following two statements are equivalent: 
\begin{enumerate}[label = \textbf{(\roman*)}]
	\item There exists a sequence $(\mathbf{x}_i)_{i\in\mathbb{N}}=(x_{1, i}, \ldots, x_{5, i})_{i \in \mathbb{N}}\in (K^\mathbb{N})^5$ such that
	\[
	\mathbf{x}_0 \in V(K) , \quad \mathbf{x}_{i + 1} = \mathbf{p}(\mathbf{x}_i) \text{ for every } i \in \mathbb{N},
	\]
	and $x_{5,i}
	\neq 0$ for $i\geq 1$. 
		\item There exists a solution of $F = 0$ in $(K^\mathbb{N})^r$ such that $g$ does not vanish on this solution.
\end{enumerate}
Following Corollary \ref{cor: zero iff algebraic} we see that (i) does not hold for the field $K=\overline{\mathbb{Q}}$, whereas (i) does hold for the field $K=\mathbb{C}$ (or any transcendental extension of $\mathbb{Q}$). Thus, (for $K=\overline{\mathbb{Q}}$) we see that $g$ vanishes on every solution of $F=0$ in $(\overline{\mathbb{Q}}^\mathbb{N})^r$, i.e., $g\in \mathcal{I}(\mathcal{V}(F))$. Whereas (for $K=\mathbb{C}$) it follows that $g$ does not vanish on every solution of $F=0$ in $(\mathbb{C}^\mathbb{N})^r$. Since an element of $\sqrt{\langle \sigma^i(F) \mid i\in\mathbb{N}\rangle}$ vanishes on every solution of $F=0$ over any field extension of $\mathbb{Q}$, we deduce that $g\notin \sqrt{\langle \sigma^i(F) \mid i\in\mathbb{N}\rangle}$.
\end{proof}

\subsection*{Acknowledgements}
The authors would like to thank Olivier Bournez, Ivan Mitrofanov, Alexey Ovchinnikov, and Amaury Pouly for helpful discussions.   
The authors thank 
the anonymous referee for a 
close reading of an earlier 
version of this paper and for 
suggesting some improvements.
This work has been partially supported by NSF grants CCF-1564132, CCF-1563942, DMS-1760448, DMS-1760212, DMS-1760413, DMS-1853482, DMS-1853650 by
PSC-CUNY grants \#69827-0047, \#60098-0048 and by the Lise Meitner grant M 2582-N32 of the Austrian Science Fund FWF.


\bibliographystyle{abbrvnat}
\bibliography{bibdata}

\begin{thebibliography}{29}
\providecommand{\natexlab}[1]{#1}
\providecommand{\url}[1]{\texttt{#1}}
\expandafter\ifx\csname urlstyle\endcsname\relax
  \providecommand{\doi}[1]{doi: #1}\else
  \providecommand{\doi}{doi: \begingroup \urlstyle{rm}\Url}\fi

\bibitem[Berger(1966)]{Berger}
R.~Berger.
\newblock The undecidability of the domino problem.
\newblock \emph{Memoirs of American Mathematical Society}, 66:\penalty0 1--72,
  1966.
\newblock URL \url{http://dx.doi.org/10.1090/memo/0066}.

\bibitem[Bournez and Pouly(2018)]{BP18}
O.~Bournez and A.~Pouly.
\newblock \emph{Handbook of Computability and Complexity in Analysis}, chapter
  A Survey on Analog Models of Computation.
\newblock Springer, to appear, 2018.
\newblock URL \url{https://arxiv.org/abs/1805.05729}.

\bibitem[{Bustamante Medina}(2007)]{BM07}
R.~F. {Bustamante Medina}.
\newblock Differentially closed fields of characteristic zero with a generic
  automorphism.
\newblock \emph{Revista de Matematica: Teoria y Aplicaciones}, 14\penalty0
  (1):\penalty0 81--100, 2007.
\newblock URL \url{https://doi.org/10.15517/rmta.v14i1.282}.

\bibitem[Chatzidakis(2015)]{C15}
Z.~Chatzidakis.
\newblock Model theory of fields with operators --- a survey.
\newblock In \emph{Logic Without Borders - Essays on Set Theory, Model Theory,
  Philosophical Logic and Philosophy of Mathematics}, pages 91--114. 2015.
\newblock URL \url{https://doi.org/10.1515/9781614516873.91}.

\bibitem[Chatzidakis and Hrushovski(1999)]{ChHr99}
Z.~Chatzidakis and E.~Hrushovski.
\newblock Model theory of difference fields.
\newblock \emph{Transactions of the American Mathematical Society},
  351\penalty0 (8):\penalty0 2997--3071, 1999.
\newblock URL \url{http://dx.doi.org/10.1090/S0002-9947-99-02498-8}.

\bibitem[Cohn(1965)]{Cohn}
R.~Cohn.
\newblock \emph{Difference Algebra}.
\newblock {Interscience Publishers John Wiley \& Sons}, New York-London-Sydeny,
  1965.

\bibitem[Everest et~al.(2003)Everest, van~der Poorten, Shparlinski, and
  Ward]{RecSeq}
G.~Everest, A.~van~der Poorten, I.~Shparlinski, and T.~Ward.
\newblock \emph{Recurrence Sequences}.
\newblock American Mathematical Society, 2003.

\bibitem[Gao et~al.(2009{\natexlab{a}})Gao, Luo, and Yuan]{GLY2009}
X.-S. Gao, Y.~Luo, and C.~Yuan.
\newblock A characteristic set method for ordinary difference polynomial
  systems.
\newblock \emph{Journal of Symbolic Computation}, 44\penalty0 (3):\penalty0
  242--260, 2009{\natexlab{a}}.
\newblock URL \url{doi.org/10.1016/j.jsc.2007.05.005}.

\bibitem[Gao et~al.(2009{\natexlab{b}})Gao, van~der Hoeven, Yuan, and
  Zhang]{Gao2009}
X.~S. Gao, J.~van~der Hoeven, C.~M. Yuan, and G.~L. Zhang.
\newblock Characteristic set method for differential--difference polynomial
  systems.
\newblock \emph{Journal of Symbolic Computation}, 44\penalty0 (9):\penalty0
  1137--1163, 2009{\natexlab{b}}.
\newblock URL \url{http://dx.doi.org/10.1016/j.jsc.2008.02.010}.

\bibitem[Hardy and Wright(2008)]{NT}
G.~H. Hardy and E.~M. Wright.
\newblock \emph{An introduction to the theory of numbers}.
\newblock Oxford University Press, 6 edition, 2008.

\bibitem[Heilbronn(1949)]{H49}
H.~A. Heilbronn.
\newblock On discrete harmonic functions.
\newblock \emph{Mathematical Proceedings of the Cambridge Philosophical
  Society}, 45\penalty0 (2):\penalty0 194–206, 1949.
\newblock URL \url{http://dx.doi.org/10.1017/S0305004100024713}.

\bibitem[Hrushovski(2001)]{Hr:MM}
E.~Hrushovski.
\newblock The {M}anin-{M}umford conjecture and the model theory of difference
  fields.
\newblock \emph{Ann. Pure Appl. Logic}, 112\penalty0 (1):\penalty0 43--115,
  2001.
\newblock ISSN 0168-0072.
\newblock \doi{10.1016/S0168-0072(01)00096-3}.
\newblock URL \url{https://doi.org/10.1016/S0168-0072(01)00096-3}.

\bibitem[Hrushovski and Point(2007)]{HrPo}
E.~Hrushovski and F.~Point.
\newblock On von {N}eumann regular rings with an automorphism.
\newblock \emph{Journal of Algebra}, 315\penalty0 (1):\penalty0 76--120, 2007.
\newblock URL \url{http://dx.doi.org/10.1016/j.jalgebra.2007.05.006}.

\bibitem[Jelonek(2005)]{Jelonek}
Z.~Jelonek.
\newblock On the effective {N}ullstellensatz.
\newblock \emph{Inventiones Mathematicae}, 162\penalty0 (1):\penalty0 1--17,
  2005.
\newblock URL \url{http://dx.doi.org/10.1007/s00222-004-0434-8}.

\bibitem[Kikyo(2004)]{Kikyo}
H.~Kikyo.
\newblock On generic predicates and automorphisms.
\newblock \emph{RIMS Kokyuroku}, 1390:\penalty0 1--8, 2004.
\newblock URL
  \url{https://repository.kulib.kyoto-u.ac.jp/dspace/bitstream/2433/25825/1/1390-1.pdf}.

\bibitem[Koiran and Moore(1999)]{KM99}
P.~Koiran and C.~Moore.
\newblock Closed-form analytic maps in one and two dimensions can simulate
  universal {T}uring machines.
\newblock \emph{Theoretical Computer Science}, 210\penalty0 (1):\penalty0
  217--223, 1999.
\newblock URL \url{https://doi.org/10.1016/S0304-3975(98)00117-0}.

\bibitem[Kolchin(1973)]{KolchinBook}
E.~R. Kolchin.
\newblock \emph{Differential Algebra and Algebraic Groups}.
\newblock Academic Press, New York, 1973.

\bibitem[Lang(1952)]{Lang:Nullstellensatz}
S.~Lang.
\newblock Hilbert's {N}ullstellensatz in infinite-dimensional space.
\newblock \emph{Proc. Amer. Math. Soc.}, 3:\penalty0 407--410, 1952.
\newblock ISSN 0002-9939.
\newblock \doi{10.2307/2031893}.
\newblock URL \url{https://doi.org/10.2307/2031893}.

\bibitem[L{\'e}on~S{\'a}nchez(2016)]{Sanchez16}
O.~L{\'e}on~S{\'a}nchez.
\newblock On the model companion of partial differential fields with an
  automorphism.
\newblock \emph{Israel Journal of Mathematics}, 212\penalty0 (1):\penalty0
  419--442, 2016.
\newblock URL \url{https://doi.org/10.1007/s11856-016-1292-y}.

\bibitem[Levin(2008)]{LevinBook}
A.~Levin.
\newblock \emph{Difference Algebra}.
\newblock Springer, 2008.
\newblock URL \url{http://dx.doi.org/10.1007/978-1-4020-6947-5}.

\bibitem[Makanin(1977)]{Makanin}
G.~S. Makanin.
\newblock The problem of solvability of equations in a free semigroup.
\newblock \emph{Math. USSR Sbornik}, 32\penalty0 (2):\penalty0 129--198, 1977.
\newblock URL \url{https://doi.org/10.1070%2Fsm1977v032n02abeh002376}.

\bibitem[Matijasevic(1970)]{M70}
Y.~V. Matijasevic.
\newblock Enumerable sets are {D}iophantine.
\newblock \emph{Soviet Mathematics: Doklady}, 11:\penalty0 354--357, 1970.

\bibitem[Moore(1990)]{M90}
C.~Moore.
\newblock Unpredictability and undecidability in dynamical systems.
\newblock \emph{Phys. Rev. Lett.}, 64:\penalty0 2354--2357, 1990.
\newblock URL \url{http://dx.doi.org/10.1103/PhysRevLett.64.2354}.

\bibitem[Ovchinnikov et~al.(2019)Ovchinnikov, Pogudin, and
  Scanlon]{difference_elimination}
A.~Ovchinnikov, G.~Pogudin, and T.~Scanlon.
\newblock Effective difference elimination and {N}ullstellensatz.
\newblock Accepted for publication in the \emph{Journal of the European
  Mathematical Society}, 2019.
\newblock URL \url{https://arxiv.org/abs/1712.01412}.

\bibitem[RItt(1950)]{Ritt}
J.~RItt.
\newblock \emph{Differential algebra}.
\newblock American Mathematical Society, 1950.

\bibitem[Schnorr(1971)]{Schnorr}
C.~P. Schnorr.
\newblock A unified approach to the definition of random sequences.
\newblock \emph{Mathematical systems theory}, 5\penalty0 (3):\penalty0
  246--258, 1971.
\newblock URL \url{https://doi.org/10.1007/BF01694181}.

\bibitem[Singer(1978)]{S78}
M.~F. Singer.
\newblock The model theory of ordered differential fields.
\newblock \emph{Journal of Symbolic Logic}, 43\penalty0 (1):\penalty0 82--91,
  1978.
\newblock URL \url{https://projecteuclid.org:443/euclid.jsl/1183740109}.

\bibitem[Tarski(1948)]{Tarski}
A.~Tarski.
\newblock \emph{A {D}ecision {M}ethod for {E}lementary {A}lgebra and
  {G}eometry}.
\newblock RAND Corporation, Santa Monica, Calif., 1948.

\bibitem[Umirbaev(2016)]{Umirbaev:AlgorithmicProblemsForDifferentialPolynomialAlgebras}
U.~Umirbaev.
\newblock Algorithmic problems for differential polynomial algebras.
\newblock \emph{J. Algebra}, 455:\penalty0 77--92, 2016.
\newblock ISSN 0021-8693.
\newblock \doi{10.1016/j.jalgebra.2016.02.010}.
\newblock URL \url{https://doi.org/10.1016/j.jalgebra.2016.02.010}.

\end{thebibliography}

\end{document}